\documentclass[12pt,reqno]{amsart}

\usepackage[margin=1in]{geometry}
\usepackage{amsmath,amssymb,amsthm,mathtools}
\usepackage{stmaryrd}
\usepackage{enumitem}
\usepackage[numbers,square,sort&compress]{natbib}
\usepackage{booktabs}
\usepackage{float}
\usepackage{tikz}
\usepackage{tikz-cd}
\usetikzlibrary{calc,positioning,arrows.meta}
\usepackage{xcolor}
\usepackage{microtype}
\usepackage[hidelinks]{hyperref}
\newcommand{\doi}[1]{\url{https://doi.org/#1}}
\hypersetup{pdftitle={Graphoid Automata, Frobenius PROPs, and Semiring Evaluation},pdfauthor={Antonios Kalampakas},pdfkeywords={graphoid automata; Frobenius monoid; cospan; corelation; commutative semiring; matrix evaluation}}
\allowdisplaybreaks[2]
\setlist[enumerate]{label=\textup{(\arabic*)},leftmargin=2.2em}

\theoremstyle{plain}
\newtheorem{theorem}{Theorem}[section]
\newtheorem{proposition}[theorem]{Proposition}
\newtheorem{lemma}[theorem]{Lemma}
\newtheorem{corollary}[theorem]{Corollary}

\theoremstyle{definition}
\newtheorem{definition}[theorem]{Definition}
\newtheorem{example}[theorem]{Example}
\newtheorem{notation}[theorem]{Notation}

\theoremstyle{remark}
\newtheorem{remark}[theorem]{Remark}

\newcommand{\Rel}{\mathrm{Rel}}
\newcommand{\BMat}{\mathrm{BMat}}
\newcommand{\KMat}{\mathrm{KMat}}

\newcommand{\GR}{\mathrm{GR}}
\newcommand{\GRcore}{\GR_{\mathrm{core}}}
\newcommand{\Cospan}{\mathrm{Cospan}}
\newcommand{\Corel}{\mathrm{Corel}}
\newcommand{\FinSet}{\mathrm{FinSet}}

\newcommand{\PROP}{PROP}
\newcommand{\bx}{\boxtimes}
\newcommand{\comp}{\circ}
\newcommand{\id}{\mathrm{id}}
\newcommand{\dmul}{d_{21}}
\newcommand{\dunit}{d_{01}}
\newcommand{\dcomul}{d_{12}}
\newcommand{\dcounit}{d_{10}}
\newcommand{\swp}{s}
\newcommand{\sem}[1]{\llbracket #1 \rrbracket}

\begin{document}

\title[Graphoid Automata and Semiring Evaluation]{Graphoid Automata, Frobenius PROPs, and Semiring Evaluation}

\author{Antonios Kalampakas}
\address{College of Engineering, American University of the Middle East, Egaila 54200, Kuwait}
\email{Antonios.Kalampakas@aum.edu.kw}
\thanks{Corresponding author: Antonios Kalampakas, \href{mailto:Antonios.Kalampakas@aum.edu.kw}{\texttt{Antonios.Kalampakas@aum.edu.kw}}.}

\subjclass[2020]{18M05, 18M30, 16Y60, 68Q70, 68Q45, 05C20}
\keywords{graphoid automata; Frobenius monoid; cospan; corelation;
commutative semiring; matrix evaluation}

\begin{abstract}
Graphoid automata evaluate graphs by composing relations between words of states. We examine the equations that make this evaluation independent of the chosen graph expression, and what changes when relations are replaced by matrices over a commutative semiring. We prove that the classical graphoid equations are equivalent to the special commutative Frobenius equations together with the required symmetry laws. The known cospan and hypergraph presentations then describe equality of structural and labelled expressions, with the bone law deleting precisely the isolated internal vertices. We use established partition representations to compare this equality with equality of evaluated matrices. For any nonempty finite state set $Q$ and commutative semiring $K$, we give a uniform criterion for two structural expressions to have the same matrix. The criterion determines which isolated-vertex counts are distinguished and when a vanishing scalar makes matrices from different boundary partitions equal. For the canonical structure, the bone law holds precisely when $|Q|\cdot1_K=1_K$. Preserving all relational compositions is a separate requirement, equivalent to additive idempotency for $|Q|\geq2$. We then examine the characteristic matrices of the known relational structures on $Q$ given by disjoint unions of Abelian groups. Their Frobenius equations hold over every commutative semiring, whereas specialness depends on the order of each group and the bone law depends on the number of groups. These conditions specify which equations remain valid when weights are assigned to a graphoid automaton.
\end{abstract}

\maketitle

\section{Introduction}\label{sec:intro}

In this paper we consider graphs with an ordered list of input vertices and an ordered list of output vertices. These lists specify the boundary positions used to join graphs, and the same vertex may occupy more than one position. We can compose two graphs when the output list of the first and the input list of the second have the same length. Their product identifies the vertices at corresponding positions in these two lists. We can also place two graphs side by side by taking their disjoint union. These two operations, called graph product and graph sum, allow directed hypergraphs to be described by expressions built from labelled hyperedges and elementary graphs. Graphoid automata use these expressions to calculate relations between words of states.

A \emph{graphoid} is the algebraic structure used for these two operations \citep{BozapalidisKalampakas2008}. Its five structural elements are $s,d_{01},d_{21},d_{10},d_{12}$. In the graph interpretation, $s$ exchanges two positions and $d_{pq}$ has one vertex occurring $p$ times as an input and $q$ times as an output. Fifteen equations govern these elements and their interaction with arbitrary graph expressions. They come from the finite graph axiomatization of \citet{BozapalidisKalampakas2004}.

To define an automaton, we assign a relation between words of states to each hyperedge label and choose structural relations satisfying the graphoid equations. We evaluate graph expressions by relational composition and parallel product. Since the chosen relations satisfy the graphoid equations, two expressions for the same graph have the same value. Initial and final regular languages specify the allowed boundary words. This construction was introduced by \citet{BozapalidisKalampakas2008}, following the recognizability results of \citet{BozapalidisKalampakas2006Recognizability}. Its algebraic operations use the magmoids of \citet{ArnoldDauchet1978}. Related constructions of graphs use graph concatenation \citep{EngelfrietVereijken1997} and hyperedge replacement \citep{DrewesHabelKreowski1997}.

Our aim is to determine which distinctions between graphs survive relational or matrix evaluation. For example, first applying $d_{01}$ and then $d_{10}$ creates a vertex and forgets its boundary occurrence. The resulting graph $d_{01}\comp d_{10}$ consists of one isolated internal vertex and differs from the empty graph $e_0$. Under the canonical relational interpretation, however, the isolated vertex can be assigned a state whenever the state set is nonempty. Both graphs therefore impose no constraint and have the same value. Matrix evaluation instead sums one unit for each possible state of the isolated vertex, and this sum may differ from the unit assigned to the empty graph.

To compare equality of graphs with equality of their values, we first show that the graphoid equations are equivalent to the special commutative Frobenius equations. The elements $d_{21},d_{01},d_{12},d_{10}$ become multiplication, unit, comultiplication and counit. The product and sum laws alone do not prescribe a symmetry, so we construct permutations from adjacent copies of $s$. Equation (G15) says that moving an extra wire past the inputs of a morphism before applying it gives the same result as moving that wire past its outputs afterwards. Applying this equation successively to a block of wires gives naturality of the symmetry.

This correspondence lets us apply known categorical presentations. \citet{RSW2005} identify the free special commutative Frobenius structure with cospans of finite sets, \citet{CoyaFong2017} give the extraspecial presentation by corelations, and \citet{BonchiGadducciKissingerSobocinskiZanasi2022} establish the corresponding description by open hypergraphs with labelled generators. We use these presentations to compare the original graphoid calculus, its bone quotient, and their interpretations.

A structural expression contains no labelled hyperedges. Its cospan is determined, up to renaming vertices, by a partition of the boundary positions and the number of isolated internal vertices. Positions belong to the same block precisely when they are attached to the same vertex. The \emph{bone law} $\dunit\comp\dcounit=e_0$ deletes the isolated vertices, leaving only the partition, or corelation.

When we put a labelled graph expression into normal form, each input and output port of a hyperedge remains attached to the same vertex. The bone law allows us to delete a vertex only if it is not on the boundary and no hyperedge is attached to it. A component that contains a labelled hyperedge therefore cannot be deleted by the bone law, even when none of its vertices is on the boundary.

For structural expressions, canonical relational evaluation distinguishes exactly the boundary partitions when there are at least two states. Matrix evaluation also depends on the isolated-vertex count. Let $Q$ be a nonempty finite state set and $K$ a commutative semiring. Each isolated vertex contributes $\lambda=|Q|\cdot1_K$, so a structural graph with partition $P$ and $c$ isolated vertices has matrix $\lambda^c\chi^K(R_P)$, where $R_P$ consists of assignments constant on each block and $\chi^K(R_P)$ is its characteristic matrix.

These matrices occur in partition categories. The maps of \citet[Definitions~1.6 and~1.7]{BanicaSpeicher2009} have entry $1$ exactly when indices agree within every block, and a closed component contributes the dimension parameter in the linear Frobenius presentation of \citet[Definition~3.1]{BrundanVargas2022}. For two states, \citet{EastJohnsonKambites2026} use the same matrix formula. Their results include faithful representations of partitions and twisted partitions, and quotients in which sufficiently many isolated vertices give zero matrices.

We use these representations to give a uniform equality criterion over an arbitrary commutative semiring and any nonempty finite state set. Two structural matrices can agree because different isolated-vertex counts give the same power of $\lambda$. When this common power is nonzero, at least two states still distinguish the boundary partitions. When it is zero, every entry vanishes and the partition can no longer be recovered. It refines the known structural representations and makes their consequences explicit for the original graphoid calculus.

The bone law requires $\lambda=1_K$. Preserving every relational composition imposes a separate condition: the product of characteristic matrices counts intermediate words, whereas relational composition asks whether any such word exists. For $|Q|\geq2$, these calculations agree for all relations exactly when addition in $K$ is idempotent. The bone law on three states can hold without this condition, so validity of the graph equations alone does not ensure preservation of arbitrary relational compositions.

Other structural relations combine states by group operations. \citet{Pavlovic2009} proved that classical structures in $\Rel$ are disjoint unions of Abelian groups, the same data used in the Abelian relational graphoids of \citet{Kalampakas2011}. We recall this correspondence through the groupoid description of \citet{HCC2013}, then determine which equations survive passage to characteristic matrices. The Frobenius equations hold over every commutative semiring. Specialness requires $|Q_i|\cdot1_K=1_K$ for each group $Q_i$, because every state in that group has $|Q_i|$ decompositions as a product. The bone law instead requires $|I|\cdot1_K=1_K$, where $I$ indexes the groups, because the unit selects one identity from each group. Singleton groups give the canonical copy and compare structure used for colourability in \citet{Kalampakas2023Colorability}. Nontrivial groups allow further state combinations, whose effect on recognition is studied in \citet{Papadopoulos2025Recognition}.

Other categorical approaches to graph recognizability use automaton functors for arrows and cospans \citep{BrugginkKoenigKupper2015,BrugginkKoenig2018}, or finite automata based on graph concatenation \citep{DrewesHoffmannMinas2025FiniteAutomata}. Here the equation correspondence allows us to use the known cospan, hypergraph and partition representations within the classical graphoid model. The resulting comparison separates equality of graphs, equality after the bone law, and equality of evaluated matrices. It also specifies when characteristic matrices preserve the compositions used by relational automata. We use the categorical account of graphical languages and hypergraph categories in \citet{Lack2004,Selinger2011,FongSpivak2019}.

After the preliminaries, Section~\ref{sec:presentation} proves the equation correspondence. Sections~\ref{sec:relational} and~\ref{sec:decorated} treat relational evaluation and labelled graphs. Sections~\ref{sec:normalforms} and~\ref{sec:completeness} give normal forms and structural completeness. Section~\ref{sec:lifting} determines matrix equality and the conditions for preserving the bone law and relational composition. Section~\ref{sec:representation} treats Abelian group structures, followed by examples and concluding remarks.

\section{Classical graphoids and categorical preliminaries}\label{sec:preliminaries}

\begin{notation}\label{not:convention}
An element of rank $(m,n)$ has $m$ input positions and $n$ output positions. We read composition from left to right, so for $f\in M_{m,n}$ and $g\in M_{n,k}$, the expression $f\comp g$ means first $f$ and then $g$. We write $\bx$ for the parallel operation, $e_n$ for the unit of rank $(n,n)$, and $e=e_1$.
\end{notation}

\begin{definition}\label{def:magmoid}
A \emph{magmoid} is a doubly ranked family of sets $M=(M_{m,n})_{m,n\geq0}$, equipped with a sequential operation and a parallel operation,
\[
\comp:M_{m,n}\times M_{n,k}\longrightarrow M_{m,k},
\qquad
\bx:M_{m,n}\times M_{p,q}\longrightarrow M_{m+p,n+q},
\]
and with a unit $e_n\in M_{n,n}$ for every $n\geq0$. The sequential operation is defined when the intermediate ranks agree, whereas the parallel operation adds the input ranks and the output ranks. Both operations are associative and satisfy the interchange law
\[
(f\comp g)\bx(h\comp k)=(f\bx h)\comp(g\bx k)
\]
whenever the composites are defined. For $f\in M_{m,n}$, the unit laws are
\[
e_m\comp f=f=f\comp e_n,\qquad
e_0\bx f=f=f\bx e_0,\qquad
e_m\bx e_n=e_{m+n}.
\]
\end{definition}

In the magmoid terminology of \citet{ArnoldDauchet1978}, the sequential and parallel operations are called product and sum. For graphs, interchange says that composing two independent pairs and then taking their disjoint union gives the same graph as taking the disjoint unions first and composing them.

A graphoid has five distinguished elements. In the graph interpretation, they exchange positions, create a vertex with an output occurrence, join input occurrences, forget an input occurrence, or repeat an output occurrence. Their equations are those of \citet{BozapalidisKalampakas2008}, based on the finite graph presentation of \citet{BozapalidisKalampakas2004}, and are also recalled in \citet{Kalampakas2023Colorability}.

\begin{definition}\label{def:graphoid}
A \emph{graphoid} is a magmoid $(M,\comp,\bx,(e_n))$ together with a distinguished set
\[
D=\{\swp,\dunit,\dmul,\dcounit,\dcomul\},
\]
where
\[
\swp\in M_{2,2},\quad \dunit\in M_{0,1},\quad
\dmul\in M_{2,1},\quad \dcounit\in M_{1,0},\quad
\dcomul\in M_{1,2},
\]
satisfying the following graph equations:

\begin{align*}
\swp\comp\swp&=e_2, \qquad\textup{(G1)}\\
(\swp\bx e)\comp(e\bx\swp)\comp(\swp\bx e)&=(e\bx\swp)\comp(\swp\bx e)\comp(e\bx\swp), \qquad\textup{(G2)}\\
(e\bx\dmul)\comp\dmul&=(\dmul\bx e)\comp\dmul, \qquad\textup{(G3)}\\
(e\bx\dunit)\comp\dmul&=e, \qquad\textup{(G4)}\\
\swp\comp\dmul&=\dmul, \qquad\textup{(G5)}\\
(e\bx\dunit)\comp\swp&=\dunit\bx e, \qquad\textup{(G6)}\\
(\swp\bx e)\comp(e\bx\swp)\comp(\dmul\bx e)&=(e\bx\dmul)\comp\swp, \qquad\textup{(G7)}\\
\dcomul\comp(e\bx\dcomul)&=\dcomul\comp(\dcomul\bx e), \qquad\textup{(G8)}\\
\dcomul\comp(e\bx\dcounit)&=e, \qquad\textup{(G9)}\\
\dcomul\comp\swp&=\dcomul, \qquad\textup{(G10)}\\
\swp\comp(e\bx\dcounit)&=\dcounit\bx e, \qquad\textup{(G11)}\\
(\dcomul\bx e)\comp(e\bx\swp)\comp(\swp\bx e)&=\swp\comp(e\bx\dcomul), \qquad\textup{(G12)}\\
\dcomul\comp\dmul&=e, \qquad\textup{(G13)}\\
(\dcomul\bx e)\comp(e\bx\dmul)&=\dmul\comp\dcomul. \qquad\textup{(G14)}
\end{align*}
Define the block permutations recursively by
\[
s_{0,1}=e,\qquad
s_{m+1,1}=(s_{m,1}\bx e)\comp(e_m\bx\swp).
\]
The element $s_{m,1}$ has rank $(1+m,m+1)$ and moves the first position past the following $m$ positions. The final axiom is
\begin{equation*}
 s_{m,1}\comp(f\bx e)=(e\bx f)\comp s_{n,1},
 \qquad f\in M_{m,n}. \qquad\textup{(G15)}
\end{equation*}
Both sides have rank $(1+m,n+1)$. On the left, the first position moves past the $m$ inputs before $f$ acts. On the right, $f$ acts first and the untouched position then moves past its $n$ outputs. Repeating this equation one position at a time gives the corresponding equations for arbitrary blocks.
A morphism of graphoids preserves ranks, the two operations, the units and each distinguished element of $D$.
\end{definition}

Let $\Sigma$ be a doubly ranked alphabet, so each letter has a prescribed number of input and output positions. We write $\GR(\Sigma)$ for the free graphoid on $\Sigma$. Its elements are expressions in these letters and the structural elements, with two expressions identified whenever the graphoid equations make them equal. By the graph presentation theorem, these elements are precisely the isomorphism classes of finite directed $\Sigma$-hypergraphs with ordered input and output lists \citep{BozapalidisKalampakas2004,BozapalidisKalampakas2008}.

Each $d_{pq}$ has one vertex, repeated at its $p$ input and $q$ output positions, and no hyperedges. In particular, $\dunit\comp\dcounit$ retains that vertex after its boundary occurrence has been forgotten, whereas $e_0$ has no vertices.

The \emph{bone law} identifies these two graphs by adding the equation
\begin{equation*}
\dunit\comp\dcounit=e_0. \qquad\textup{(B)}
\end{equation*}
We write $\GRcore(\Sigma)$ for the quotient of $\GR(\Sigma)$ by the bone law.

We now express the magmoid operations in categorical terms. Take the natural numbers as objects and take the elements of $M_{m,n}$ as morphisms from $m$ to $n$. The sequential operation becomes composition of morphisms, and the parallel operation becomes the tensor product, with addition as the tensor product of objects. The magmoid laws say exactly that these operations form a strict monoidal category.

A \PROP{} is a category of this form equipped with a natural symmetry. The magmoid laws alone do not provide maps that exchange positions. In Section~\ref{sec:presentation}, we construct them from $\swp$ using (G1), (G2), interchange and (G15). For the categorical terminology and its graphical interpretation, see \citet{Lack2004} and \citet{Selinger2011}.

The other four distinguished elements will form a multiplication, a unit, a comultiplication and a counit on the object $1$, related by the Frobenius equations.

\begin{definition}\label{def:frobenius}
Let $\mathcal C$ be a symmetric strict monoidal category. A \emph{commutative Frobenius monoid} on an object $X$ consists of a commutative monoid
\[
\mu:X\otimes X\to X,\qquad \eta:I\to X,
\]
and a cocommutative comonoid
\[
\delta:X\to X\otimes X,\qquad \varepsilon:X\to I,
\]
satisfying, in our composition convention, the Frobenius law
\[
(\delta\otimes\id_X)\comp(\id_X\otimes\mu)
=\mu\comp\delta
=(\id_X\otimes\delta)\comp(\mu\otimes\id_X).
\]
The structure is \emph{special} if
\[
\delta\comp\mu=\id_X,
\]
and \emph{extraspecial} if it is special and also satisfies
\[
\eta\comp\varepsilon=\id_I.
\]
In a dagger symmetric monoidal category it is a \emph{dagger-Frobenius monoid} if $\delta=\mu^\dagger$ and $\varepsilon=\eta^\dagger$.
\end{definition}

Under the graph interpretation, the extra equation is the bone law. For background on Frobenius algebras and classical structures, see \citet{Kock2004}, \citet{CPV2013}, \citet{Pavlovic2009}, and \citet{CoeckeKissinger2017}.

\begin{remark}\label{rem:convention-frob}
In the usual right-to-left convention, specialness is written $\mu\circ\delta=\id_X$ and the extra equation is written $\varepsilon\circ\eta=\id_I$. We use the left-to-right order of Notation~\ref{not:convention} throughout.
\end{remark}

Cospans of finite sets describe structural graphs by assigning a vertex to each boundary position. Corelations retain only which positions have the same assigned vertex.

\begin{definition}\label{def:cospan-corel}
The \PROP{} $\Cospan(\FinSet)$ has the natural numbers as objects. A morphism $m\to n$ is an isomorphism class of cospans of finite sets
\[
\{1,\ldots,m\}\longrightarrow A\longleftarrow\{1,\ldots,n\}.
\]
The apex $A$ is the vertex set, with input and output occurrences specified by the two maps. Composition is pushout over the common boundary, identifying the vertices at corresponding positions, and the monoidal product is disjoint union.

A morphism $m\to n$ in $\Corel$ is a partition of the $m+n$ boundary positions. The partition associated with a cospan puts two positions in the same block if their images in the apex agree. To compose two partitions, identify their corresponding common boundary positions. Take the equivalence relation generated by both partitions on the resulting set of positions, then restrict it to the input positions of the first and the output positions of the second. These are the external positions of the composite. The monoidal product is disjoint union of partitions.
\end{definition}

Each block of the induced partition corresponds to one apex element reached by a boundary map. Apex elements outside both boundary images contribute no blocks. Thus, to recover a cospan up to isomorphism from its corelation, we must also specify how many of these additional apex elements it has.

For evaluation, we use relations between words of states and matrices whose entries assign weights to pairs of such words.

\begin{definition}\label{def:rel-kmat}
Let $Q$ be a finite set of states, and let $Q^m$ be the set of state words of length $m$. Write $\varepsilon$ for the empty word, so $Q^0=\{\varepsilon\}$. A morphism $m\to n$ in the \PROP{} $\Rel(Q)$ is a relation $R\subseteq Q^m\times Q^n$. Composition is relational composition, so a pair of external words belongs to the composite when some intermediate word relates them through the two factors. The monoidal product is Cartesian product, with pairs of words identified with their concatenation.

For a commutative semiring $K$, the morphisms $m\to n$ in $\KMat(Q)$ are $K$-matrices indexed by $Q^m\times Q^n$. They compose by matrix multiplication, summing over the intermediate words. Their monoidal product is the Kronecker product, where words are concatenated to form the indices. The Boolean case is denoted by $\BMat(Q)$.

Relational converse gives the dagger on $\Rel(Q)$, and matrix transpose gives the dagger on $\KMat(Q)$.
\end{definition}

Finiteness of $Q$ ensures that matrix composition uses only finite sums. We allow $Q=\varnothing$ unless a statement explicitly requires one or more states.

To include letters of $\Sigma$, we equip the apex with labelled hyperedges and ordered incidence lists. As with structural graphs, no injectivity is required of the boundary maps.

\begin{definition}\label{def:open-hyp}
Let $\Sigma=(\Sigma_{m,n})_{m,n\geq0}$ be a doubly ranked alphabet. An open $\Sigma$-hypergraph from $m$ to $n$ consists of a finite vertex set $A$, boundary maps
\[
\{1,\ldots,m\}\longrightarrow A\longleftarrow\{1,\ldots,n\},
\]
and a finite set of hyperedge occurrences. Each occurrence has a label $\sigma\in\Sigma_{p,q}$, an ordered list of $p$ source vertices and an ordered list of $q$ target vertices. Either incidence list may be empty, so labels of rank $(0,q)$, $(p,0)$ or $(0,0)$ are allowed. Distinct occurrences may have the same label and the same incidence lists. Vertices may repeat in these lists, just as they may in the boundary. We compose these hypergraphs by joining corresponding boundary vertices and take their disjoint union as the monoidal product.
\end{definition}

We shall use the following presentations after establishing the equivalence of the graphoid and Frobenius equations.

\begin{theorem}\label{fact:freeprop}
The \PROP{} freely generated by a special commutative Frobenius monoid is equivalent to $\Cospan(\FinSet)$. The \PROP{} freely generated by an extraspecial commutative Frobenius monoid is equivalent to $\Corel$.
\end{theorem}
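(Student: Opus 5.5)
This theorem collects known presentations, so the plan is to assemble a self-contained argument whose structure matches \citet{RSW2005} and \citet{CoyaFong2017}. Let $\mathcal F$ be the \PROP{} presented by $\mu,\eta,\delta,\varepsilon$ on the object $1$ subject to the special commutative Frobenius equations. First we define a \PROP{} morphism $F:\mathcal F\to\Cospan(\FinSet)$ on generators. The multiplication goes to the cospan $2\to1\leftarrow1$, and the unit to $0\to1\leftarrow1$. The comonoid maps go to the mirror cospans. We then check by direct pushout computation that each defining equation holds in $\Cospan(\FinSet)$. Specialness, $\delta\comp\mu=\id$, holds because the pushout of $1\to2\leftarrow1$ along the fold map collapses to a single point.

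\textbf{Fullness.} Every cospan $m\xrightarrow{f}A\xleftarrow{g}n$ factors as $m\to A\xrightarrow{=}A\leftarrow n$. Each function between finite sets is, up to permutation, a tensor of maps $k\to1$. In $\mathcal F$, a map $k\to1$ is a $k$-fold multiplication, with $\eta$ used when $k=0$. Consequently every cospan has the form $(\sigma\comp\bigotimes_i\mu^{(k_i)})\comp(\bigotimes_i\delta^{(l_i)}\comp\tau)$ for permutations $\sigma,\tau$. Here $\mu^{(k)}$ and $\delta^{(l)}$ denote the $k$-fold multiplication and $l$-fold comultiplication.

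\textbf{Faithfulness.} This is the main obstacle. We must show that every composite of generators rewrites in $\mathcal F$ to the normal form above, and that the normal form is determined by the cospan. The plan is to prove, by induction on term structure, that the normal forms are closed under composition with generators. Composing with $\mu$ or $\eta$ on the left, or with $\delta$ or $\varepsilon$ on the right, merges fibres or adds a vertex. The difficult interactions are $\delta$ applied after a multiplication block and $\varepsilon$ applied after a comultiplication. For these we use the Frobenius law, associativity and commutativity to bring any connected multiplication/comultiplication diagram to the form $\mu^{(k)}\comp\delta^{(l)}$. We then use specialness to remove loops, meaning genus, since $\delta\comp\mu=\id$. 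Isolated components reduce to the scalars $(\eta\comp\varepsilon)^c$, which survive as additional apex points.

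Uniqueness then follows. The cospan recovers the fibre sizes, the boundary attachment and the count $c$, and the remaining freedom in permutations is absorbed by commutativity and cocommutativity. Equivalently, we may cite the distributive-law factorisation $\Cospan(\FinSet)\cong\FinSet\otimes_{\mathbb P}\FinSet^{\mathrm{op}}$ of \citet{Lack2004}. In that approach the monoid half presents $\FinSet$, the comonoid half presents $\FinSet^{\mathrm{op}}$, and the Frobenius and special laws are exactly the distributive law given by pullback-to-pushout.

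For $\Corel$ we add $\eta\comp\varepsilon=\id_0$. Under the first equivalence this equation identifies cospans that differ only in apex points outside both boundary images, as noted after Definition~\ref{def:cospan-corel}. The quotient map from $\Cospan(\FinSet)$ to $\Corel$ is full, because it is the identity on partitions. Its kernel congruence is generated by deleting isolated apex points, and that deletion is precisely the extra equation tensored with identities. It follows that the extraspecial presentation is equivalent to $\Corel$, as in \citet{CoyaFong2017}.
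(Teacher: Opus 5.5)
Your proposal is correct and follows the same route as the paper, which proves this statement by citing \citet{RSW2005} for the cospan presentation and \citet{CoyaFong2017} for the corelation presentation. Your normal-form argument (or the distributive-law alternative via \citet{Lack2004}) and your kernel-congruence argument for $\Cospan(\FinSet)\to\Corel$ are the content of those references, with two harmless slips. First, the specialness pushout is of the span $1\leftarrow2\to1$, with the fold map on both legs. Second, the nontrivial cases in your induction are a multiplication applied after comultiplication legs, i.e.\ $\mu$ post-composed on two outputs or $\delta$ pre-composed on inputs of a normal form, which is where Frobenius fusion and specialness are needed; by contrast, $\delta$ after a multiplication block is already a spider, and $\varepsilon$ after a comultiplication is just the counit law.
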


\begin{proof}
The cospan presentation is proved by \citet{RSW2005} and is also discussed in \citet{CoyaFong2017,FongSpivak2019}. The corelation presentation is proved by \citet{CoyaFong2017}.
\end{proof}

The following spider theorem describes how the structural maps simplify within each connected component.

\begin{theorem}\label{fact:spider}
In a special commutative Frobenius \PROP{}, a connected structural component with $p$ inputs and $q$ outputs is equal to the canonical spider $S_{p,q}$. Consequently, every structural diagram is a monoidal product of these spiders, up to permutations of the input and output wires. A component with no boundary gives the closed scalar spider $S_{0,0}$. In the extraspecial quotient, each closed scalar spider is equal to the scalar identity.
\end{theorem}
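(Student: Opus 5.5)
We derive the spider theorem from the cospan presentation in Theorem~\ref{fact:freeprop}. Any special commutative Frobenius \PROP{} $\mathcal C$ receives a unique structure-preserving functor from the free one, so it suffices to prove the equalities in the free \PROP{}, that is, in $\Cospan(\FinSet)$. We then transport them along this functor. We define the canonical spider $S_{p,q}$ recursively. Set $S_{0,0}=\eta\comp\varepsilon$ and $S_{1,1}=\id_X$. For $p\geq1$, take $S_{p+1,q}=(\mu\bx e_{p-1})\comp S_{p,q}$, using the left-to-right convention; $S_{0,q}$ is built from $\eta$ followed by iterated $\delta$, and similarly on the output side. By associativity, commutativity and the symmetry, $S_{p,q}$ does not depend on the bracketing, and it is invariant under permutations of its inputs and of its outputs.

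In $\Cospan(\FinSet)$, $S_{p,q}$ is the cospan $p\to\{*\}\leftarrow q$. A structural diagram is a morphism of the free \PROP{}. Its image cospan $m\to A\leftarrow n$ decomposes as a disjoint union over the apex elements $a\in A$. Each $a$ contributes a one-point cospan with $p_a$ inputs and $q_a$ outputs, where these are the sizes of the preimages of $a$. The apex elements outside both images give $p_a=q_a=0$. After permuting the boundary so that the preimages of each $a$ are contiguous, the cospan becomes $\sigma\comp\bigl(\bigotimes_a S_{p_a,q_a}\bigr)\comp\tau$ for permutations $\sigma,\tau$. Faithfulness of the presentation, which is the equivalence in Theorem~\ref{fact:freeprop}, then gives this equality already in the free \PROP{}. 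Mapping it into $\mathcal C$ proves the second claim.

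For the first claim, a connected structural component is one whose diagram is connected as a graph. Its cospan image has a single apex element, since pushouts of connected pieces remain connected and no composition can separate vertices. By faithfulness again, the component equals $S_{p,q}$. If one prefers a direct rewriting argument, the key step is the standard one: use the Frobenius law and specialness to show that two spiders connected by $k\geq1$ wires fuse into one spider. The Frobenius law merges one wire, and specialness $\delta\comp\mu=\id$ removes the extra parallel wires. Induction on the number of generators then gives the result.

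The closed component $S_{0,0}=\eta\comp\varepsilon$ equals $\id_I$ in the extraspecial quotient by definition of extraspecial. By Theorem~\ref{fact:freeprop}, this is consistent with $\Corel$, where apex elements with no boundary preimage disappear.

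The main obstacle is making ``connected component'' precise for an expression and checking that connectivity is reflected in the single-apex property. Directly, the hardest part is the spider-fusion lemma when several wires join two spiders; I would try induction on $k$, using commutativity to bring the wires adjacent.
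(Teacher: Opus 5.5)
Your proof is correct, but it takes a different route from the paper's. The paper defines $S_{p,q}=\mu^{(p)}\comp\delta^{(q)}$ from iterated (co)multiplication and then invokes the spider theorem of Coecke--Kissinger. That theorem is proved by rewriting, directly from (co)associativity, (co)commutativity, the Frobenius law and specialness, in any symmetric monoidal category. The tensor decomposition and the closed scalar $S_{0,0}=\eta\comp\varepsilon$ are then read off component by component.

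You instead use Theorem~\ref{fact:freeprop} as a black box. You compute that $S_{p,q}$ goes to the one-point cospan $p\to\{*\}\leftarrow q$. You note that any cospan $m\to A\leftarrow n$ is, after reordering the boundary, a disjoint union of such one-point cospans, with $p_a=q_a=0$ for apex points outside both images. You pull this equality back along the faithful presentation and push it into $\mathcal C$ along the unique structure-preserving functor.

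What your route gains is a precise meaning for ``connected component'': it is an apex element of the image cospan, provided swaps and identity strands are treated as transparent wires rather than as nodes, since the swap's cospan has two points. You also need no rewriting at all.

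What it loses is independence. The known proofs of the cospan presentation (Lack; Rosebrugh--Sabadini--Walters) themselves establish essentially this normal form: multiplications followed by comultiplications, one spider per apex point. Your derivation is therefore sound within the paper, where Theorem~\ref{fact:freeprop} is cited, but it could not replace the rewriting argument as a foundation for that presentation. Your fallback fusion sketch is exactly the core of the cited spider theorem: the Frobenius law merges two spiders along one wire, and specialness deletes the remaining parallel wires.
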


\begin{proof}
Associativity and coassociativity allow us to define iterated multiplication and comultiplication without choosing parentheses,
\[
\mu^{(p)}:X^{\otimes p}\to X,\qquad
\delta^{(q)}:X\to X^{\otimes q}.
\]
We take $\mu^{(0)}=\eta$, $\delta^{(0)}=\varepsilon$ and $\mu^{(1)}=\delta^{(1)}=\id_X$. Commutativity and cocommutativity make the order of the incident wires immaterial.

We now invoke the spider theorem for special commutative Frobenius monoids \citep{CoeckeKissinger2017}. For a connected component with $p$ inputs and $q$ outputs, it gives the canonical representative
\[
S_{p,q}=\mu^{(p)}\comp\delta^{(q)}:X^{\otimes p}\longrightarrow X^{\otimes q}.
\]
Applying the reduction component by component gives a tensor product of spiders. Boundary permutations group the wires of each component and restore their original order. For a component with no boundary the reduction gives
\[
S_{0,0}=\eta\comp\varepsilon:I\to I.
\]
This scalar remains in the free special theory and becomes the identity in the extraspecial quotient.
\end{proof}

\section{Graphoid axioms as Frobenius equations}\label{sec:presentation}

Table~\ref{tab:graphoid-frobenius} gives the correspondence between the graphoid elements and the Frobenius maps. We first construct the natural symmetry from $\swp$, then show that the four other elements form a special commutative Frobenius monoid.
\begin{table}[H]
\centering
\caption{The structural graphoid elements and their Frobenius interpretation}
\label{tab:graphoid-frobenius}
\begin{tabular}{@{}c c c l@{}}
\toprule
graphoid element & rank & Frobenius map & boundary operation\\
\midrule
$\swp$ & $2\to2$ & symmetry & exchange two positions\\
$\dunit$ & $0\to1$ & $\eta$ & introduce a vertex and output occurrence\\
$\dmul$ & $2\to1$ & $\mu$ & join two input occurrences\\
$\dcounit$ & $1\to0$ & $\varepsilon$ & forget an input occurrence\\
$\dcomul$ & $1\to2$ & $\delta$ & repeat an output occurrence\\
\bottomrule
\end{tabular}
\end{table}
The terms ``copy'' and ``discard'' refer to wires or to their interpreted states. In the graph itself, $\dcomul$ repeats a boundary occurrence of one vertex, and $\dcounit$ forgets an occurrence while retaining the vertex.

\begin{figure}[H]
\centering
\begin{tikzpicture}[
    x=1cm,y=1cm,
    wire/.style={line width=0.55pt},
    dot/.style={circle,fill=black,inner sep=1.35pt},
    lab/.style={font=\scriptsize},
    every node/.style={font=\small}
]

% d01: unit
\begin{scope}[shift={(0,0)}]
\node[dot] (u) at (0,0) {};
\draw[wire] (u) -- (1.0,0);
\node[lab] at (0.5,0.35) {$d_{01}=\eta$};
\node[lab] at (0.5,-0.35) {$0\to 1$};
\end{scope}

% d21: multiplication
\begin{scope}[shift={(2.4,0)}]
\node[dot] (m) at (0,0) {};
\draw[wire] (-1.0,0.35) -- (m);
\draw[wire] (-1.0,-0.35) -- (m);
\draw[wire] (m) -- (1.0,0);
\node[lab] at (0,0.72) {$d_{21}=\mu$};
\node[lab] at (0,-0.72) {$2\to 1$};
\end{scope}

% d10: counit
\begin{scope}[shift={(5.0,0)}]
\node[dot] (c) at (0,0) {};
\draw[wire] (-1.0,0) -- (c);
\node[lab] at (-0.5,0.35) {$d_{10}=\varepsilon$};
\node[lab] at (-0.5,-0.35) {$1\to 0$};
\end{scope}

% d12: comultiplication
\begin{scope}[shift={(7.2,0)}]
\node[dot] (cm) at (0,0) {};
\draw[wire] (-1.0,0) -- (cm);
\draw[wire] (cm) -- (1.0,0.35);
\draw[wire] (cm) -- (1.0,-0.35);
\node[lab] at (0,0.72) {$d_{12}=\delta$};
\node[lab] at (0,-0.72) {$1\to 2$};
\end{scope}

% s: symmetry
\begin{scope}[shift={(10.2,0)}]
\draw[wire] (-0.9,0.35) .. controls (-0.25,0.35) and (0.25,-0.35) .. (0.9,-0.35);
\draw[wire] (-0.9,-0.35) .. controls (-0.25,-0.35) and (0.25,0.35) .. (0.9,0.35);
\node[lab] at (0,0.72) {$s$};
\node[lab] at (0,-0.72) {$2\to 2$};
\end{scope}

\end{tikzpicture}
\caption{The graphoid generators, read from left to right. Dots represent vertices and wires their boundary occurrences. The counit retains its vertex after the input wire terminates.}
\label{fig:structural-generators}
\end{figure}

We now verify the correspondence shown in Figure~\ref{fig:structural-generators} from the classical equations.

\begin{lemma}\label{lem:dictionary}
Under the identifications
\[
\dmul=\mu,\qquad \dunit=\eta,\qquad
\dcomul=\delta,\qquad \dcounit=\varepsilon,
\]
the graphoid equations give a special commutative Frobenius monoid, with $\swp$ generating the natural symmetry. Conversely, a special commutative Frobenius structure on the generating object $1$ of a \PROP{} satisfies all fifteen graph equations. The bone equation (B) is precisely the extra equation required for extraspecialness.
\end{lemma}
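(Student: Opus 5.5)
The plan has two directions plus the bone statement, and the first step is to build the symmetry. Define $\swp_{m,n}$ from adjacent copies of $\swp$ by the usual recursion: $\swp_{1,1}=\swp$, $\swp_{m,1}=s_{m,1}$, and block swaps assembled from these through interchange. By (G1) and (G2), the Coxeter relations of the symmetric group hold, so every permutation of $n$ wires is represented by a well defined element of $M_{n,n}$. Next I will show that (G15), applied one wire at a time, yields naturality of $s_{m,1}$ with respect to every $f\in M_{m,n}$. An induction on the width of the moving block then upgrades this to naturality of all block swaps $\swp_{k,m}$. Together with $\swp\comp\swp=e_2$ and the hexagon identities, which hold by construction for block swaps, this makes the magmoid a symmetric strict monoidal category, that is, a PROP. I expect the induction from (G15) to block naturality to be the main obstacle. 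The index bookkeeping (whether $s_{m,1}$ moves the first wire rightward, and how the block recursion composes with it) has to match the left-to-right composition convention exactly. My first attempt will be to prove $\swp_{k,m}\comp(f\bx e_k)=(e_k\bx f)\comp\swp_{k,n}$ by induction on $k$, peeling off one wire with (G15) and using interchange to slide the remaining block.

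With the symmetry in hand, I will match the remaining equations directly. (G3) and (G4) give associativity and the right unit law for $\mu$. The left unit law follows from (G6), (G5) and (G4): $(\eta\bx e)\comp\mu=(e\bx\eta)\comp\swp\comp\mu=(e\bx\eta)\comp\mu=e$. (G5) is commutativity. The duals (G8)–(G11) give coassociativity, the counit laws and cocommutativity in the same way. (G13) is specialness, $\delta\comp\mu=\id$. (G14) is one half of the Frobenius law. The other half, $(e\bx\delta)\comp(\mu\bx e)=\mu\comp\delta$, I will obtain by conjugating (G14) with symmetries and using commutativity and cocommutativity. (G7) and (G12) then become the naturality of the symmetry with respect to $\mu$ and $\delta$, which is a special case of the naturality already established. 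So the Frobenius axioms are derived.

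For the converse, start from a special commutative Frobenius monoid in a PROP with symmetry $\swp$. (G1) and (G2) are symmetric monoidal coherence. (G15) and its special cases (G6), (G7), (G11) and (G12) are instances of naturality of the symmetry applied to $\eta$, $\mu$, $\varepsilon$ and $\delta$. (G3)–(G5), (G8)–(G10), (G13) and (G14) are Frobenius axioms read in our convention. The one remaining check is that the left unit law is not assumed separately in the graphoid list, which is harmless in this direction. Finally, (B) reads $\eta\comp\varepsilon=e_0=\id_I$, which is literally the extraspecial equation of Definition~\ref{def:frobenius}, so adding it on either side corresponds exactly to passing to the extraspecial theory.
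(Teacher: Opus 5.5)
Your proposal is correct and takes essentially the same route as the paper. It builds the permutations from (G1), (G2) and interchange via the Coxeter presentation, gets block naturality by iterating (G15) one wire at a time, reads off the (co)monoid laws from (G3)--(G11), obtains the second Frobenius half by conjugating (G14), and handles the converse by naturality and coherence; your left-unit step, which substitutes (G6) directly, even skips the paper's use of involutivity. One thing to tidy is the subscript convention: you set $\swp_{m,1}=s_{m,1}$, where the first index counts the wires passed, but your induction $\swp_{k,m}\comp(f\bx e_k)=(e_k\bx f)\comp\swp_{k,n}$ needs the first index to be the size of the moving block, so that its case $k=1$ is (G15) with $\swp_{1,m}=s_{m,1}$.
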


\begin{proof}
For the symmetry, place $\swp$ between identity wires to exchange adjacent positions. Equations (G1) and (G2) give the involution and braid relations, and exchanges in disjoint positions commute by interchange. These are the defining relations of the symmetric group, so we obtain permutations on each tensor power. Equation (G15) allows one position to move past an arbitrary morphism $f$. Applying it to the positions of a block in turn gives naturality for all block exchanges, so the magmoid with these permutations is a \PROP{}.

Equations (G3)--(G5) give associativity, a right unit and commutativity for $\dmul$. To obtain the left unit, (G6) and involutivity give
\[
(\dunit\bx e)\comp\swp
=(e\bx\dunit)\comp\swp\comp\swp
=e\bx\dunit.
\]
Using (G5), this identity and (G4), we obtain
\[
(\dunit\bx e)\comp\dmul
=(\dunit\bx e)\comp\swp\comp\dmul
=(e\bx\dunit)\comp\dmul=e.
\]
Hence $(\dmul,\dunit)$ is a commutative monoid, with symmetry compatibility expressed by (G7).

Dually, (G8)--(G10) give coassociativity, the right counit law and cocommutativity. Equation (G11) moves the counit past the swap. Using it with cocommutativity gives the left counit law, by the dual of the unit calculation above. Thus $(\dcomul,\dcounit)$ is a cocommutative comonoid. Equation (G12) expresses its compatibility with the symmetry.

Equation (G13) gives specialness. To relate the monoid to the comonoid, substitute $\dmul=\mu$ and $\dcomul=\delta$ in (G14), obtaining
\[
(\delta\otimes\id_X)\comp(\id_X\otimes\mu)=\mu\comp\delta.
\]
Now exchange the two inputs and the two outputs on each side of this equality. Commutativity of $\mu$ and cocommutativity of $\delta$ leave $\mu\comp\delta$ unchanged. In the other composite the exchange moves $\delta$ to the second input and $\mu$ to the first output. Naturality of the symmetry therefore gives
\[
\mu\comp\delta=(\id_X\otimes\delta)\comp(\mu\otimes\id_X).
\]
This gives the Frobenius law of Definition~\ref{def:frobenius}.

Conversely, suppose that the generating object of a \PROP{} carries a special commutative Frobenius structure. Its monoid and comonoid laws give (G3), (G4), (G5), (G8), (G9) and (G10), and its specialness and Frobenius laws give (G13) and (G14). Naturality of the symmetry supplies (G6), (G7), (G11) and (G12), while the symmetric monoidal axioms give (G1), (G2) and (G15). Thus all fifteen graph equations hold. Under the same identifications, $\eta\comp\varepsilon=\id_I$ becomes $\dunit\comp\dcounit=e_0$. The extra equation is therefore exactly the bone law.
\end{proof}

The equivalence of equations in Lemma~\ref{lem:dictionary} allows us to apply the free presentations of Theorem~\ref{fact:freeprop}.

\begin{theorem}\label{thm:presentation}
The classical graphoid equations present the special commutative Frobenius \PROP{}. In magmoid notation, the equations explicitly include the generation of the symmetry and its naturality. Consequently,
\[
\GR(\varnothing)\simeq\Cospan(\FinSet).
\]
Adding the bone law gives the extraspecial commutative Frobenius \PROP{}, and hence
\[
\GRcore(\varnothing)\simeq\Corel.
\]
\end{theorem}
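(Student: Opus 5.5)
The plan is to upgrade the equation-level correspondence of Lemma~\ref{lem:dictionary} into an isomorphism of presented \PROP{}s, and then compose with Theorem~\ref{fact:freeprop}. Write $\mathbf{F}$ for the \PROP{} freely generated by a special commutative Frobenius monoid on the object $1$. First, the magmoid $\GR(\varnothing)$ becomes a \PROP{}: by the first part of the proof of Lemma~\ref{lem:dictionary}, the adjacent transpositions built from $\swp$ satisfy the Coxeter relations via (G1), (G2) and interchange, and iterating (G15) block by block gives naturality. So the block permutations $s_{m,n}$ form a natural symmetry. The lemma also shows that $(\dmul,\dunit,\dcomul,\dcounit)$ is a special commutative Frobenius monoid on $1$ in $\GR(\varnothing)$. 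By the universal property of $\mathbf{F}$ we obtain a symmetric strict monoidal functor $\Phi:\mathbf{F}\to\GR(\varnothing)$ that sends $\mu,\eta,\delta,\varepsilon$ to $\dmul,\dunit,\dcomul,\dcounit$. Because it is a symmetric monoidal functor, it sends the generating symmetry to $\swp$.

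Conversely, $\mathbf{F}$ is a strict monoidal category on the natural numbers, hence a magmoid. We distinguish its symmetry $\sigma_{1,1}$ and the four Frobenius maps. By the converse half of Lemma~\ref{lem:dictionary}, all fifteen graph equations hold. For (G15), one must check that the recursively defined $s_{m,1}$ coincides with the \PROP{} symmetry $\sigma_{1,m}$ in $\mathbf{F}$. This follows by induction from the hexagon identity $\sigma_{1,m+1}=(\sigma_{1,m}\bx e)\comp(e_m\bx\sigma_{1,1})$; (G15) is then an instance of naturality. Since $\GR(\varnothing)$ is the free graphoid on the empty alphabet, there is a unique graphoid morphism $\Psi:\GR(\varnothing)\to\mathbf{F}$.

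The two composites $\Psi\comp\Phi$ and $\Phi\comp\Psi$ fix the generators. $\Psi\comp\Phi$ is a symmetric monoidal functor on $\mathbf{F}$ that fixes the Frobenius generators, so it is the identity by the uniqueness part of freeness. $\Phi\comp\Psi$ is a graphoid endomorphism of $\GR(\varnothing)$ that fixes $D$. Since every element of $\GR(\varnothing)$ is an expression in $D$, $\Phi\comp\Psi$ is also the identity. Hence $\GR(\varnothing)\cong\mathbf{F}$, and Theorem~\ref{fact:freeprop} gives $\GR(\varnothing)\simeq\Cospan(\FinSet)$. This also makes precise the claim that the graphoid equations present the special commutative Frobenius \PROP{}. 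The symmetry generation and its naturality are exactly the content of (G1), (G2) and (G15).

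For the bone quotient, we impose (B) on both sides. Lemma~\ref{lem:dictionary} identifies (B) with $\eta\comp\varepsilon=\id_I$. The quotient of $\mathbf{F}$ by this equation is the free extraspecial commutative Frobenius \PROP{}. The isomorphism $\Phi$ maps the congruence generated by one equation onto the congruence generated by its image, so it descends to $\GRcore(\varnothing)\cong\mathbf{F}/(\eta\comp\varepsilon=\id_I)\simeq\Corel$, again by Theorem~\ref{fact:freeprop}.

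The main obstacle is the first step: checking that the graphoid presentation really yields a \emph{symmetric} structure whose symmetry is the one $\mathbf{F}$ expects. In particular, $\Phi$ must be well defined, which requires that the symmetry morphisms of $\mathbf{F}$ built in the \PROP{} sense map to the $s_{m,n}$. Naturality with respect to all morphisms must also be derived from (G15), which only moves a single wire. I would handle this by induction on $m$, establishing in $\GR(\varnothing)$ both the hexagon identity and naturality of $s_{m,n}$ from single-wire moves. The rest is bookkeeping with universal properties.
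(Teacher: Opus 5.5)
Your proposal is correct and follows essentially the paper's route: Lemma~\ref{lem:dictionary} gives mutually inverse morphisms between $\GR(\varnothing)$ and the free special commutative Frobenius \PROP{}, and Theorem~\ref{fact:freeprop} then finishes. The paper compresses this step into ``the same universal property'' and writes out the $U$/$V$ construction only in Theorem~\ref{thm:decorated}. For the bone quotient you transport the congruence generated by (B) through the isomorphism. The paper instead describes it as deleting apex points outside the boundary images, which is the same step viewed on cospans.
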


\begin{proof}
By Lemma~\ref{lem:dictionary}, an assignment of the structural generators satisfies the graph equations exactly when the assigned maps form a special commutative Frobenius structure. Thus the free graphoid $\GR(\varnothing)$ and the free special commutative Frobenius \PROP{} have the same universal property. The first part of Theorem~\ref{fact:freeprop} identifies the latter with $\Cospan(\FinSet)$.

We describe the interpretation explicitly to show what the vertices of a graph become in the cospan. Send each of the four one-vertex generators to a cospan with one apex point,
\[
\begin{array}{rcl}
\dunit:0\to1&\longmapsto&\varnothing\longrightarrow\{*\}\longleftarrow\{1\},\\[2mm]
\dmul:2\to1&\longmapsto&\{1,2\}\longrightarrow\{*\}\longleftarrow\{1\},\\[2mm]
\dcounit:1\to0&\longmapsto&\{1\}\longrightarrow\{*\}\longleftarrow\varnothing,\\[2mm]
\dcomul:1\to2&\longmapsto&\{1\}\longrightarrow\{*\}\longleftarrow\{1,2\}.
\end{array}
\]
The image of $\swp$ is the permutation cospan
\[
\{1,2\}\longrightarrow\{a,b\}\longleftarrow\{1,2\},
\]
whose left map sends $1$ to $a$ and $2$ to $b$, and whose right map sends $1$ to $b$ and $2$ to $a$.

Conversely, consider a finite cospan and an element of its apex. Count its input occurrences and output occurrences, say $p$ and $q$, and represent this element by the spider $S_{p,q}$. Use $S_{0,0}$ if the element has no boundary occurrence. Taking the tensor product over all apex elements and restoring the specified boundary order gives an expression for the cospan. By Theorem~\ref{fact:freeprop}, two such expressions are equal precisely when their cospans are isomorphic.

The scalar $\dunit\comp\dcounit$ represents an apex point outside the boundary images. Since each such point is a disjoint component, (B) removes them independently. The remaining cospan is determined by its boundary partition and, by the second part of Theorem~\ref{fact:freeprop}, the quotient is $\Corel$.
\end{proof}

Adding a generator for each letter of $\Sigma$ leaves the correspondence between the structural equations unchanged. Section~\ref{sec:decorated} gives the resulting hypergraph presentation.

\begin{corollary}\label{cor:syntax-quotient}
The original graph syntax $\GR(\Sigma)$ is a special Frobenius calculus of open hypergraphs. Its bone quotient $\GRcore(\Sigma)$ is the corresponding extraspecial calculus.
\end{corollary}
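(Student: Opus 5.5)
The plan is to reduce the corollary to Theorem~\ref{thm:presentation} by observing that adding letters of $\Sigma$ only adds generators and no equations beyond (G15). We then invoke the open-hypergraph presentation of \citet{BonchiGadducciKissingerSobocinskiZanasi2022}. That result says that the \PROP{} freely generated by a special commutative Frobenius monoid on $1$, together with one morphism $\sigma:p\to q$ for each $\sigma\in\Sigma_{p,q}$ subject to no further equations, is equivalent to the \PROP{} of open $\Sigma$-hypergraphs of Definition~\ref{def:open-hyp}, taken up to isomorphism.

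First, we compare universal properties. A graphoid morphism $\GR(\Sigma)\to M$ is determined by images of the letters and of the structural elements. By Lemma~\ref{lem:dictionary}, the structural images satisfy (G1)--(G14) exactly when they form a special commutative Frobenius structure with $\swp$ generating a symmetry. The remaining axiom (G15), applied to every $f$ including letters and composites, is exactly naturality of that symmetry, as established in the proof of Lemma~\ref{lem:dictionary}. Hence $\GR(\Sigma)$ and the free special commutative Frobenius \PROP{} on the signature $\Sigma$ have the same universal property, so they are isomorphic. The presentation of \citet{BonchiGadducciKissingerSobocinskiZanasi2022} then identifies $\GR(\Sigma)$ with open $\Sigma$-hypergraphs. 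The identification sends each letter to a single hyperedge whose source and target lists consist of distinct vertices attached to the boundary, and it sends structural generators to the cospans listed in the proof of Theorem~\ref{thm:presentation}. This agrees with the graph presentation theorem of \citet{BozapalidisKalampakas2004}.

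Second, for the bone quotient, we add (B). By Lemma~\ref{lem:dictionary}, (B) is exactly $\eta\comp\varepsilon=\id_I$, so the universal property of $\GRcore(\Sigma)$ is that of the free extraspecial commutative Frobenius \PROP{} on $\Sigma$, which is what we mean by the corresponding extraspecial calculus. Concretely, it is the quotient of open hypergraphs by the congruence generated by deleting an isolated vertex. The introduction notes that this congruence deletes only vertices that are neither boundary vertices nor incident to any hyperedge.

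The main obstacle is the naturality step. Naturality must hold for letters, not just structural maps, and (G15) is stated only for single-wire crossings. I would extend it to block crossings $s_{m,k}$ by induction on $k$, using the recursive definition of $s_{m,1}$ and interchange. Once this holds, the magmoid is genuinely a \PROP{} with $\Sigma$ adjoined freely, and the external presentation theorem applies. A secondary point is to check that the cited result uses the same convention on non-injective boundary maps and on hyperedges with empty incidence lists. Definition~\ref{def:open-hyp} matches their discrete-cospan formulation, so I would only remark on this.
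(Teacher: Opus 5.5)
Your proposal is correct and follows essentially the paper's route. The paper gets this corollary by noting that adjoining the letters leaves Lemma~\ref{lem:dictionary} unchanged. In Theorem~\ref{thm:decorated} it then shows that $\GR(\Sigma)$ and the free special commutative Frobenius \PROP{} on $\Sigma$ have the same universal property, and cites \citet{BonchiGadducciKissingerSobocinskiZanasi2022} for the open-hypergraph description; it identifies (B) with $\eta\comp\varepsilon=\id_I$ exactly as you do.

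The one soft spot is your concrete description of the bone quotient, which you justify only by a remark in the introduction. The paper gives the actual argument in Corollary~\ref{cor:decorated-core}: deleting all isolated vertices defines a congruence, and each such vertex splits off as a tensor factor $\dunit\comp\dcounit$. That description is not needed for the statement itself, so this does not affect correctness.
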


The graph $\dunit\comp\dcounit$ has one vertex, whereas $e_0$ has none, so they are distinct in $\GR(\Sigma)$. Thus the bone law does not follow from the original equations. Section~\ref{sec:decorated} determines its effect when labelled hyperedges are present.

\section{The relational classical structure}\label{sec:relational}

In the canonical relational interpretation, each vertex is assigned one state,
which is shared by all its incident ports. For a nonempty finite state set
$Q$, the structural operations are
\begin{align*}
\dcomul&=\{(g,gg):g\in Q\},&
\dmul&=\{(gg,g):g\in Q\},\\
\dunit&=\{(\varepsilon,g):g\in Q\},&
\dcounit&=\{(g,\varepsilon):g\in Q\},\\
\swp&=\{(g_1g_2,g_2g_1):g_1,g_2\in Q\}.
\end{align*}
The multiplication $\dmul$ compares its two input states and returns their
common value when they agree. Its converse $\dcomul$ makes two copies of a
state. Moreover, $\dunit$ allows any state to be chosen at a newly created
port, $\dcounit$ removes a port without restricting its state, and $\swp$
exchanges two ports together with their states.

These relations give the canonical $D$-set used in unitary graphoid automata.
The resulting relational graphoid is denoted $\mathrm{TSRel}(Q)$ by
\citet{Kalampakas2023Colorability}. It can also be obtained by treating each
state as a singleton group, as we explain in Section~\ref{sec:representation}.

\begin{proposition}\label{prop:rel-classical}
For every nonempty finite set $Q$, the above $D$-set forms an extraspecial
commutative dagger-Frobenius monoid in $\Rel(Q)$, with relational converse as
the dagger.
\end{proposition}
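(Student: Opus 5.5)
The plan is a direct verification in $\Rel(Q)$, computing each required composite by relational composition. The simplifying observation is that every structural relation above is the graph of the diagonal $Q\to Q^k$, or its converse, up to reordering coordinates. So each composite should again be a ``spider'' relation: it relates $u\in Q^m$ to $v\in Q^n$ exactly when the wires attached to a single vertex carry equal states. Before verifying anything, I would record that $\swp$ is the genuine symmetry of $\Rel(Q)$, namely the permutation relation exchanging letters. Its naturality and the symmetric monoidal axioms are then standard facts about the Cartesian monoidal structure of relations.

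Next come the monoid laws. The composite $(e\bx\dmul)\comp\dmul$ relates $g_1g_2g_3$ to $g$ exactly when $g_1=g_2=g_3=g$, and so does $(\dmul\bx e)\comp\dmul$; this gives associativity. For the unit law, $(e\bx\dunit)\comp\dmul$ relates $g_1$ to $g$ iff there exists $h$ with $g_1=h=g$. This is the identity relation, and the existence of $h$ is exactly where nonemptiness of the intermediate choice enters. Commutativity $\swp\comp\dmul=\dmul$ holds because the set $\{(gg,g)\}$ is invariant under exchanging the input letters.

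The dagger then reduces the comonoid side to the monoid side. Observe that $\dcomul=\dmul^\dagger$ and $\dcounit=\dunit^\dagger$ by definition. Converse reverses composition, preserves $\bx$, and satisfies $\swp^\dagger=\swp$. Hence coassociativity, the counit law and cocommutativity follow by applying converse to the monoid equations, and the dagger-Frobenius condition holds by construction.

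It remains to check the Frobenius, special and extraspecial laws. For the Frobenius law, each of the three composites $(\dcomul\bx e)\comp(e\bx\dmul)$, $\dmul\comp\dcomul$ and $(e\bx\dcomul)\comp(\dmul\bx e)$ relates $g_1g_2$ to $h_1h_2$ exactly when all four letters are equal. In each case the intermediate word is forced, so the computation is short. Specialness is the statement that $\dcomul\comp\dmul$ relates $g$ to $h$ iff there is $k$ with $gg=kk$ and $k=h$, that is, iff $g=h$; this is the identity. Extraspecialness asks that $\dunit\comp\dcounit$ relate $\varepsilon$ to $\varepsilon$ iff some $g\in Q$ exists, which is precisely the hypothesis $Q\neq\varnothing$. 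I therefore expect no step to be a real obstacle. The only points needing care are keeping the left-to-right composition convention of Remark~\ref{rem:convention-frob} straight, and noting explicitly that nonemptiness is used only in the bone law. For $Q=\varnothing$ the composite $\dunit\comp\dcounit$ is the empty relation, which differs from $e_0=\{(\varepsilon,\varepsilon)\}$.
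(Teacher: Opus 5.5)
Your proposal is correct and follows the paper's proof essentially step for step. Like the paper, you obtain the comonoid laws from the monoid laws by relational converse, check that each Frobenius composite accepts exactly the tuples of four equal states, note that specialness has the forced witness $u=v=g=h$, and observe that the bone law holds precisely because some $q\in Q$ exists. One wording point: in the unit law the witness $h=g_1$ is forced by the given input, so nonemptiness of $Q$ plays no role there, which agrees with your later remark that it is used only in the bone law.
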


\begin{proof}
Comparison and copying are converse relations, so $\dcomul=\dmul^\dagger$. Similarly, $\dcounit=\dunit^\dagger$, and the swap is its own converse.

Either bracketing of three inputs under $\dmul$ requires all three states to agree and returns their common state. This proves associativity, and commutativity follows because the comparison is unchanged by exchanging its inputs. In the unit composite, the state supplied by $\dunit$ must equal the given input, so the composite is the identity. Taking converses gives the comonoid laws.

Each of the three composites in the Frobenius law accepts a pair of input
states and a pair of output states exactly when all four states agree.
Indeed, every intermediate state is copied from, or compared with, one of
these states. The three composites therefore give the same relation.
For specialness, $(g,h)\in\dcomul\comp\dmul$ precisely when some $u,v\in Q$ satisfy
\[
g=u=v=h.
\]
Such states exist exactly when $h=g$, so $\dcomul\comp\dmul=e$.

For the bone law there is no boundary state to prescribe the intermediate
choice. The relation $\dunit\comp\dcounit:0\to0$ first chooses a state and
then removes its port. Hence
\[
(\varepsilon,\varepsilon)\in\dunit\comp\dcounit
\quad\Longleftrightarrow\quad
\exists q\in Q.
\]
Since $Q$ is nonempty, $\dunit\comp\dcounit=\{(\varepsilon,\varepsilon)\}=e_0$, proving extraspecialness.
\end{proof}

If $Q=\varnothing$, the same structural relations still satisfy the original graphoid equations. Every relation appearing in (G1)--(G14) is then empty, and the swap gives the natural symmetry required by (G15). However, $\dunit\comp\dcounit$ is the empty relation of type $0\to0$, whereas $e_0=\{(\varepsilon,\varepsilon)\}$. Thus the structure is special but does not satisfy the bone law.

We can also represent these relations by Boolean matrices.

\begin{corollary}\label{cor:chi-models}
The characteristic-matrix map
\[
\chi\colon \Rel(Q)\longrightarrow \BMat(Q)
\]
is a canonical isomorphism of graphoid models. It sends relational composition,
monoidal product, converse, and the canonical $D$-set to their Boolean-matrix
counterparts.
\end{corollary}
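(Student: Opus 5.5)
The map $\chi$ sends a relation $R\subseteq Q^m\times Q^n$ to the Boolean matrix with entry $1$ at $(u,v)$ exactly when $(u,v)\in R$. I will check that it is a bijection on each hom-set, that it is functorial and monoidal, and that it preserves the dagger and the $D$-set. Transport of structure then finishes the argument.

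\emph{Bijectivity.} For fixed $m,n$, a subset of $Q^m\times Q^n$ is the same datum as its indicator function $Q^m\times Q^n\to\{0,1\}$. So $\chi$ is a bijection $\Rel(Q)(m,n)\to\BMat(Q)(m,n)$ on every hom-set. It is also the identity on objects.

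\emph{Composition.} This is the one step with real content. For $R:m\to n$ and $S:n\to k$, matrix multiplication in $\BMat(Q)$ gives
\[
(\chi(R)\comp\chi(S))_{u,w}=\bigvee_{v\in Q^n}\chi(R)_{u,v}\wedge\chi(S)_{v,w}.
\]
Since $Q^n$ is finite, this join equals $1$ exactly when some intermediate word $v$ satisfies $(u,v)\in R$ and $(v,w)\in S$. That is precisely the condition for $(u,w)\in R\comp S$. The key point is that Boolean addition is idempotent, so the sum records the existence of a witness and does not count witnesses. Units go to units, because the diagonal relation $e_n$ has the identity matrix as its characteristic matrix.

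\emph{Monoidal product and dagger.} For the monoidal product, the Kronecker entry at the concatenated indices $(u u',v v')$ is $\chi(R)_{u,v}\wedge\chi(R')_{u',v'}$. This is the indicator of the Cartesian product of $R$ and $R'$ under the identification of pairs of words with their concatenations. For the dagger, the converse relation corresponds to the transposed matrix directly from the definitions.

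\emph{The $D$-set and the graph equations.} $\BMat(Q)$ is equipped with the Boolean matrices of $\swp,\dunit,\dmul,\dcounit,\dcomul$, so $\chi$ preserves the distinguished elements by definition. Since $\chi$ is a bijective PROP isomorphism preserving the dagger, every equation valid in $\Rel(Q)$ transfers to $\BMat(Q)$. In particular, Proposition~\ref{prop:rel-classical} and Lemma~\ref{lem:dictionary} give (G1)--(G15), and the bone law as well. Hence $\chi$ is an isomorphism of graphoid models, and its inverse, reading off the support of a Boolean matrix, is also a morphism.

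The only point needing care is the composition step, where idempotency of $\vee$ is essential. This is exactly what will fail later for a general semiring $K$.
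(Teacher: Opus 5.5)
Your proposal is correct and follows essentially the same route as the paper: bijectivity via indicator functions, the Boolean join recording the existence of an intermediate witness in composition, conjunction in the Kronecker product, transpose for converse, and preservation of the $D$-set by definition. One small caveat is that your remark that the bone law also transfers needs $Q\neq\varnothing$, as in Proposition~\ref{prop:rel-classical}; the isomorphism itself does not depend on the bone law.
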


\begin{proof}
For a relation $R\subseteq Q^m\times Q^n$, set
\[
\chi(R)_{x,y}=
\begin{cases}
1,& (x,y)\in R,\\
0,& (x,y)\notin R.
\end{cases}
\]
The inverse sends a Boolean matrix to the set of pairs where its entry is $1$, so $\chi$ is bijective on each set of morphisms.

Let $R\subseteq Q^m\times Q^n$ and $S\subseteq Q^n\times Q^k$. The entry of
their Boolean matrix product at $(x,z)$ is
\[
(\chi(R)\chi(S))_{x,z}
=
\bigvee_{y\in Q^n}\bigl(\chi(R)_{x,y}\wedge\chi(S)_{y,z}\bigr).
\]
This entry is $1$ exactly when some intermediate word $y\in Q^n$ satisfies $(x,y)\in R$ and $(y,z)\in S$, which is the condition for $(x,z)\in R\comp S$.

For the monoidal product, split each boundary word into the two parts belonging to its factors. The pair belongs to the product relation exactly when both resulting pairs belong to their respective relations. The Boolean Kronecker product uses the same conjunction, so $\chi$ preserves this operation. Transpose exchanges input and output words and therefore corresponds to relational converse. In particular, $\chi$ sends the five structural relations to their canonical Boolean matrices.
\end{proof}

Membership fixes every matrix entry, so the isomorphism is canonical for a fixed labelled state set. Renaming $Q$ by a permutation gives an isomorphic representation.

\section{Decorated cospans and graph automata as functors}\label{sec:decorated}

To include labels, we equip the cospan apex with the hyperedge occurrences and ordered incidence lists of Definition~\ref{def:open-hyp}. The resulting cospans are instances of decorated or structured cospans \citep{Fong2015,BaezCourser2020}. The free Frobenius presentation by labelled hypergraphs is established in \citet[Theorem~4.1 and Corollary~4.2]{BonchiGadducciKissingerSobocinskiZanasi2022}. Lemma~\ref{lem:dictionary} gives the same presentation from the classical graphoid equations.

\begin{theorem}\label{thm:decorated}
For every doubly ranked alphabet $\Sigma$, the free graphoid $\GR(\Sigma)$ is
equivalent to the \PROP{} of isomorphism classes of open $\Sigma$-hypergraphs
from Definition~\ref{def:open-hyp}. Equivalently, it is the free special
commutative Frobenius \PROP{} with one additional generator
$\sigma\colon m\to n$ for each $\sigma\in\Sigma_{m,n}$.
\end{theorem}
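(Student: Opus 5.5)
The plan is to obtain Theorem~\ref{thm:decorated} from Lemma~\ref{lem:dictionary} together with the labelled hypergraph presentation of \citet[Theorem~4.1 and Corollary~4.2]{BonchiGadducciKissingerSobocinskiZanasi2022}.

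\textbf{Step 1: the two universal properties coincide.} By definition, $\GR(\Sigma)$ is the free graphoid on $\Sigma$: the free magmoid on the five structural generators and the letters of $\Sigma$, quotiented by (G1)--(G15). Lemma~\ref{lem:dictionary} works at the level of equations in an arbitrary magmoid. It shows that a magmoid together with an interpretation of $D$ satisfies (G1)--(G15) exactly when $\swp$ generates a natural symmetry and $(\dmul,\dunit,\dcomul,\dcounit)$ is a special commutative Frobenius monoid on $1$.

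The key point is that (G15) is imposed for every $f$, including the letters of $\Sigma$ and all composites built from them. Hence the symmetry is natural with respect to the new generators as well. Consequently, a graphoid morphism out of $\GR(\Sigma)$ is the same as a \PROP{} morphism out of the free special commutative Frobenius \PROP{} with one added generator $\sigma\colon m\to n$ for each $\sigma\in\Sigma_{m,n}$. Both objects are therefore initial in the same category, which gives the second formulation of the theorem. I expect this naturality check for arbitrary expressions to be the main obstacle. It needs the block version of (G15), obtained by iterating it one position at a time exactly as in the proof of Lemma~\ref{lem:dictionary}, so that the permutations commute past composites of letters and not only past structural maps.

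\textbf{Step 2: the hypergraph model.} I would invoke \citet{BonchiGadducciKissingerSobocinskiZanasi2022}, which identifies this free \PROP{} with isomorphism classes of open $\Sigma$-hypergraphs having arbitrary, not necessarily injective, boundary maps. Their setting matches Definition~\ref{def:open-hyp}: repeated vertices in incidence lists, hyperedges of rank $(0,q)$, $(p,0)$ or $(0,0)$, and parallel occurrences with the same label and incidences are all allowed.

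To make the equivalence concrete, I would describe the interpretation functor explicitly. The structural generators go to the one-point cospans listed in the proof of Theorem~\ref{thm:presentation}. Each letter $\sigma\in\Sigma_{p,q}$ goes to the hypergraph with one hyperedge and $p+q$ distinct vertices, the sources being the input boundary and the targets the output boundary. Composition is pushout of vertex sets with hyperedges carried along, and the monoidal product is disjoint union.

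\textbf{Step 3: fullness.} To show the functor is full, I would write an arbitrary open hypergraph in normal form. Tensor its hyperedge generators, and connect the resulting ports and the boundary to each vertex by the spider $S_{p,q}$ of Theorem~\ref{fact:spider}, where $p$ and $q$ count incident occurrences, followed by a permutation. Faithfulness is exactly the cited freeness result.

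I would finally note that this agrees with the original graph presentation theorem of \citet{BozapalidisKalampakas2004}, which serves as a consistency check.
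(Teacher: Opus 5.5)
Your proposal follows the paper's proof. The paper also uses Lemma~\ref{lem:dictionary} to obtain morphisms $U\colon\GR(\Sigma)\to\mathcal F(\Sigma)$ and $V\colon\mathcal F(\Sigma)\to\GR(\Sigma)$ from the two free properties, notes that both composites fix every generator, and then cites \citet{BonchiGadducciKissingerSobocinskiZanasi2022} for the open-hypergraph model with one spider per vertex.

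One detail the paper makes explicit is left implicit in your Step~3. With a single spider per vertex composed once with the tensor of generators and a permutation, you cannot wire a directed cycle, for instance a self-loop. For this the paper uses the cup $\dunit\comp\dcomul$ and the cap $\dmul\comp\dcounit$. Alternatively, you can split each vertex's spider into an input part and an output part joined by a passing wire.
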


\begin{proof}
Write $\mathcal F(\Sigma)$ for the free special commutative Frobenius \PROP{} with the additional generators $\sigma\colon m\to n$. Lemma~\ref{lem:dictionary} turns its Frobenius generators into a graphoid structural $D$-set. The free property of $\GR(\Sigma)$ therefore gives a morphism
\[
U:\GR(\Sigma)\longrightarrow\mathcal F(\Sigma),\qquad U(\sigma)=\sigma.
\]
In the other direction, the lemma equips $\GR(\Sigma)$ with a special commutative Frobenius structure and a natural symmetry. The free property of $\mathcal F(\Sigma)$ gives a morphism
\[
V:\mathcal F(\Sigma)\longrightarrow\GR(\Sigma),\qquad V(\sigma)=\sigma.
\]
Both composites fix the structural generators and the labels. Since every expression is built from these generators, the composites are identity morphisms, so $U$ and $V$ are inverse isomorphisms.

The cited hypergraph presentation identifies $\mathcal F(\Sigma)$ with open hypergraphs, in agreement with the classical graph presentation recalled in Section~\ref{sec:preliminaries}. A vertex is represented by a spider whose legs are its boundary occurrences and incident hyperedge ports. When neither occurs, the spider is closed. Each labelled hyperedge contributes one generator occurrence, with its source and target ports in the prescribed order. Reducing the structural maps to spiders leaves every hyperedge port attached to the same vertex.

The cup $\dunit\comp\dcomul:0\to2$ and cap $\dmul\comp\dcounit:2\to0$ let us bend wires to make the required connections, including those in a directed cycle. A matching cup and cap can be straightened by a snake identity. The graph presentation ensures that two arrangements with the same incidences give the same morphism.
\end{proof}

\begin{corollary}\label{cor:decorated-core}
In the open hypergraph representation, the quotient $\GRcore(\Sigma)$ deletes
exactly the vertices with no boundary occurrence and no incident hyperedge.
Vertices incident with labelled hyperedges remain, even when their component
has no external boundary. To describe the remaining structural wiring, regard
the external positions and the hyperedge ports as its boundary. The wiring
then determines a partition of these positions and ports, and hence a corelation.
\end{corollary}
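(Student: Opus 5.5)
The plan is to work entirely in the open hypergraph representation of Theorem~\ref{thm:decorated}. We form the quotient of that \PROP{} by the single equation (B) and identify the resulting equivalence classes. An element of $\GRcore(\Sigma)$ is then an open $\Sigma$-hypergraph up to isomorphism, modulo the congruence generated by $\dunit\comp\dcounit=e_0$. Closing (B) under the operations means we may replace $f$ by $(\dunit\comp\dcounit)\bx f$, or conversely, in any context. Interchange and the unit laws let every such context be rewritten as a tensor factor. So the congruence is generated by adding or removing a disjoint component consisting of a single vertex with no boundary occurrence and no incident hyperedge.

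First, for each open hypergraph $H$ I will define $\mathrm{red}(H)$ by deleting all vertices with no boundary occurrence and no incident hyperedge. This is well defined on isomorphism classes. It is invariant under the generating moves, since adding or removing such a bare vertex does not change the result. I will then check that it is compatible with composition and tensor. Tensor is immediate. For composition, gluing along a common boundary identifies the shared boundary vertices of the two factors. A vertex becomes bare in the composite exactly when it arises from identified boundary vertices that carry no hyperedge port on either side and no remaining external occurrence. Deleting these after composing gives the same isomorphism class as first reducing each factor and then composing and reducing again. So $\mathrm{red}$ descends to a \PROP{} morphism whose kernel contains (B). Conversely, $H$ equals $\mathrm{red}(H)\bx(\dunit\comp\dcounit)^{\bx c}$ in $\GR(\Sigma)$, where $c$ counts the bare vertices. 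Hence $H$ and $\mathrm{red}(H)$ are identified by (B), and the kernel is exactly the (B)-congruence. This shows that the quotient deletes exactly the bare vertices. Vertices incident with hyperedges survive even in components without external boundary, because $\mathrm{red}$ never removes them.

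For the last assertion, I will cut every hyperedge occurrence out of $H$. The remaining structural part is a cospan whose boundary consists of the external positions together with the source and target ports of the hyperedges. A vertex incident with a hyperedge now has a boundary occurrence, so the only apex points outside the boundary images are the bare vertices. By Theorem~\ref{thm:presentation}, (B) removes exactly these, leaving the corelation given by the partition of positions and ports by shared vertex.

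The main obstacle I expect is the compatibility of $\mathrm{red}$ with composition. The difficulty is vertices that become bare only after gluing, for example through a cup $\dunit\comp\dcomul$ composed with a cap $\dmul\comp\dcounit$. I would handle this by checking that the pushout of the reduced factors, with newly bare vertices then deleted, is isomorphic to the reduction of the unreduced pushout. This holds because bare vertices of either factor lie outside the gluing boundary and so pass unchanged into the composite, where they are deleted anyway.
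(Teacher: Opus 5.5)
Your proposal is correct and follows essentially the same route as the paper. Both define removal of the bare vertices and check that it commutes with disjoint union and with composition, since bare vertices lie outside the boundary images and can be removed before gluing; both conclude that ``equal after removal'' is a congruence containing (B), and conversely use $H=\mathrm{red}(H)\bx(\dunit\comp\dcounit)^{\bx c}$ to delete each bare vertex by one use of (B). Your explicit treatment of the corelation clause, cutting out the hyperedges so that their ports become boundary, spells out what the paper's proof leaves implicit.
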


\begin{proof}
A structural component with no boundary or labelled incidence reduces to $S_{0,0}=\dunit\comp\dcounit$. The bone law replaces it by $e_0$, deleting the corresponding vertex.

We must also show that the bone law makes no further identifications. Remove from each hypergraph every vertex with neither boundary nor hyperedge incidence. A vertex removed in this way is outside both boundary images, so it could not have been joined to another vertex during composition. We may therefore remove these vertices before composing. Composition can make further vertices isolated, but removing these as well gives the same hypergraph as removing all isolated vertices after composition. Removal also commutes with disjoint union. Consequently, declaring two hypergraphs equal when their results after removal are isomorphic defines a congruence.

This congruence satisfies the bone law and leaves each hyperedge port attached to its original vertex. Conversely, every removed vertex gives a closed structural spider and can be deleted by one use of the bone law. Thus the congruence is exactly the bone quotient.
\end{proof}

To specify a graphoid automaton, we choose a relational graphoid and assign a transition relation to each label. The canonical structure of Section~\ref{sec:relational} gives unitary graphoid automata. Theorem~\ref{thm:decorated} extends these assignments to graphs.

Semantic functors from cospans of labelled ordinary graphs were developed by \citet[Section~4]{RSW2005}. The following proposition expresses the classical graphoid-automaton definition \citep{BozapalidisKalampakas2008} in functorial form, with labels having prescribed numbers of inputs and outputs.

\begin{proposition}\label{prop:automata-functors}
Equip the state set $Q$ with the canonical copy and compare relations.
A relational graphoid automaton over $\Sigma$ with this structural choice is
equivalently a \PROP{} morphism
\[
\bar\delta\colon \GR(\Sigma)\longrightarrow \Rel(Q)
\]
together with initial and final rational subsets of $Q^*$. The morphism sends
the structural $D$-set to the canonical relational Frobenius structure of
Section~\ref{sec:relational}, and each label $\sigma\in\Sigma$ to its
transition relation $\delta(\sigma)$. The rational subsets are equivalently
regular languages over the alphabet $Q$. Composing the morphism with
$\chi\colon\Rel(Q)\to\BMat(Q)$ gives the Boolean-matrix version.

For a commutative semiring $K$, a $K$-matrix graphoid automaton has the same
description with a \PROP{} morphism $\GR(\Sigma)\to\KMat(Q)$. Its structural
interpretation is given by the canonical copy/compare matrices.
\end{proposition}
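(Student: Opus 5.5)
The proof has two directions. First, a relational graphoid automaton with the canonical structural choice yields such a morphism. Conversely, any morphism of this kind determines the data of an automaton.

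Recall the data of a classical graphoid automaton \citep{BozapalidisKalampakas2008}. It has a finite state set $Q$, a structural $D$-set in $\Rel(Q)$ satisfying the graph equations (here the canonical copy/compare relations), a transition relation $\delta(\sigma)\subseteq Q^m\times Q^n$ for each $\sigma\in\Sigma_{m,n}$, and initial and final rational sets. Graph expressions are evaluated by relational composition and Cartesian product.

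\emph{From automaton to morphism.} The evaluation preserves units, $\comp$, $\bx$ and the distinguished elements by definition. Since the canonical $D$-set satisfies the graphoid equations (Proposition~\ref{prop:rel-classical} together with Lemma~\ref{lem:dictionary}), the free property of $\GR(\Sigma)$ shows that evaluation descends to a well-defined graphoid morphism $\bar\delta$ extending $\delta$. The symmetry of $\GR(\Sigma)$ is built from $\swp$, and $\bar\delta(\swp)$ is the relational swap. Hence $\bar\delta$ sends the permutations generated by $\swp$ to those of $\Rel(Q)$, and it is a \PROP{} morphism.

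\emph{From morphism to automaton.} A \PROP{} morphism with the stated values on the structural $D$-set is determined by its values on the labels. This follows because $\GR(\Sigma)$ is generated by $\Sigma$ and $D$, which is the uniqueness part of Theorem~\ref{thm:decorated}. Restricting $\bar\delta$ to the labels recovers $\delta$, so the two constructions are mutually inverse.

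The accepting data, namely the initial and final subsets, are carried along unchanged. The identification of rational subsets of the free monoid $Q^*$ with regular languages is Kleene's theorem.

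The Boolean version is obtained by composing with the isomorphism $\chi$ of Corollary~\ref{cor:chi-models}. For a commutative semiring $K$, I would repeat the same argument in $\KMat(Q)$. Here the only point to check is that the canonical copy/compare matrices satisfy the fifteen graph equations. By Lemma~\ref{lem:dictionary} it suffices to check the special commutative Frobenius laws. Each entry of these composites is a sum over intermediate words in which at most one term is nonzero, and that term equals $1_K$ exactly when all boundary states agree. The symmetry axioms hold for permutation matrices over any $K$.

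The step I expect to need the most care is this matrix verification. The point is to confirm that specialness involves no multiplicity: $\dcomul\comp\dmul$ has a single intermediate word $gg$, so it equals $e$. I must also not claim the bone law, since $\dunit\comp\dcounit=|Q|\cdot1_K$. This is fine because only $\GR(\Sigma)$, not $\GRcore(\Sigma)$, is the domain.
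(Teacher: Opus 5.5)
Your proposal is correct and follows essentially the same route as the paper. You apply the free property of $\GR(\Sigma)$ to the canonical $D$-set and the transition relations, carry the initial and final languages along unchanged, and compose with $\chi$ for the Boolean version; for $\KMat(Q)$ you use the entrywise check that each Frobenius composite has at most one compatible intermediate word (the content of Theorem~\ref{thm:special-lift}) without claiming the bone law. The uniqueness of $\bar\delta$ comes from the uniqueness clause of the free property of $\GR(\Sigma)$, that is, generation by $\Sigma$ and $D$, rather than from Theorem~\ref{thm:decorated} specifically, but your reasoning there is the same.
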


\begin{proof}
Assign the canonical relations to the structural generators and set $\bar\delta(\sigma)=\delta(\sigma)$ for each label. The free property gives a unique morphism $\bar\delta$ with these values. To calculate its value on a graph, choose a term for the graph and compose the assigned relations according to that term. Any other term for the same graph is equal to it in $\GR(\Sigma)$, so the morphism gives the same relation. This is the transition extension required for the automaton.

The initial and final languages select allowed boundary words after this transition relation has been calculated, so they do not alter $\bar\delta$. Conversely, the morphism supplies the graph transitions, and the two rational languages supply the acceptance data.

Composing with the isomorphism $\chi$ from Corollary~\ref{cor:chi-models} gives the Boolean-matrix description. For general $K$, the canonical structural matrices satisfy the special Frobenius equations by Theorem~\ref{thm:special-lift}. The same free property then extends any assignment of transition matrices to a morphism into $\KMat(Q)$.
\end{proof}

We next determine when graph evaluation is unchanged by deleting an isolated internal vertex.

\begin{corollary}\label{cor:factor-core}
Every Boolean relational graphoid automaton with the canonical structure and a
nonempty finite state set factors through $\GRcore(\Sigma)$. A $K$-matrix
graphoid automaton with the canonical copy/compare structure factors through
this quotient if and only if $|Q|\cdot 1_K=1_K$.
\end{corollary}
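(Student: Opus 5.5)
The plan is to reduce factorization through the quotient to checking the single bone equation on the image. By definition, $\GRcore(\Sigma)$ is the quotient of $\GR(\Sigma)$ by the congruence generated by (B). A \PROP{} morphism $F\colon\GR(\Sigma)\to\mathcal C$ therefore factors, necessarily uniquely, through the quotient map exactly when $F(\dunit)\comp F(\dcounit)=F(e_0)=\id_0$ in $\mathcal C$. The kernel of $F$ is a congruence for $\comp$ and $\bx$, so it contains the generated congruence as soon as it contains the generating pair. Labels play no role here: (B) involves only the structural generators, and every automaton in question assigns them the canonical structure.

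For the Boolean relational case we use Proposition~\ref{prop:automata-functors}, which presents the automaton as a \PROP{} morphism $\bar\delta\colon\GR(\Sigma)\to\Rel(Q)$ sending the $D$-set to the canonical relations. Proposition~\ref{prop:rel-classical} already shows that $\dunit\comp\dcounit=\{(\varepsilon,\varepsilon)\}=e_0$ in $\Rel(Q)$ when $Q\neq\varnothing$, because some state witnesses the intermediate choice. Hence $\bar\delta$ factors through $\GRcore(\Sigma)$. The initial and final languages are applied only after evaluation, so they are unaffected. The Boolean-matrix version follows by composing with the isomorphism $\chi$ of Corollary~\ref{cor:chi-models}.

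For the $K$-matrix case, the automaton is a morphism $\GR(\Sigma)\to\KMat(Q)$ with the canonical copy/compare matrices. The unit $\dunit$ is the $1\times|Q|$ row of ones indexed by $(\varepsilon,g)$, and $\dcounit$ is the $|Q|\times 1$ column of ones. Their matrix product is the $1\times1$ matrix whose entry is $\sum_{g\in Q}1_K\cdot1_K=|Q|\cdot1_K$. The identity $e_0$ in $\KMat(Q)$ is the $1\times1$ matrix $(1_K)$, since $Q^0=\{\varepsilon\}$. So the image of (B) holds exactly when $|Q|\cdot1_K=1_K$, and by the first paragraph this is equivalent to factorization. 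For the ``only if'' direction, factorization forces the images of $\dunit\comp\dcounit$ and $e_0$ to coincide, because the two expressions are identified in $\GRcore(\Sigma)$. This yields the scalar equation directly, with no appeal to labels.

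The only point needing care is that factorization is equivalent to preserving the generating equation. The quotient is defined as a congruence closure in the magmoid sense, so we must check that the kernel of a \PROP{} morphism is closed under both operations. This holds because $F$ preserves $\comp$ and $\bx$. Corollary~\ref{cor:decorated-core} is not needed, although it confirms that the quotient deletes exactly the closed spiders whose value is $|Q|\cdot1_K$.
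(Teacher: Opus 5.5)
Your proposal is correct and follows essentially the same route as the paper. Factorization through $\GRcore(\Sigma)$ is equivalent to the interpretation respecting the single defining equation (B). That equation holds relationally by Proposition~\ref{prop:rel-classical}, and over $K$ it reduces to comparing $|Q|\cdot1_K$ with $1_K$. The paper cites Theorem~\ref{thm:bone-scalar} for that scalar, whereas you compute the row-times-column product directly; since the empty sum is $0_K$, your computation also covers the case $Q=\varnothing$, which the paper treats separately.
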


\begin{proof}
To induce a morphism on the quotient, an interpretation must assign the same value to the two sides of its additional defining equation, $\dunit\comp\dcounit=e_0$. This condition is sufficient by the universal property of the quotient. The canonical relational structure satisfies it by Proposition~\ref{prop:rel-classical}. For matrices with nonempty $Q$, Theorem~\ref{thm:bone-scalar} calculates the left-hand scalar as $|Q|\cdot1_K$, whereas the right-hand scalar is $1_K$. If $Q$ is empty, the bone scalar is $0_K$, so the same criterion still applies.
\end{proof}

To express acceptance as a scalar, we represent the allowed boundary words by vectors and compose them with the graph transition.

\begin{corollary}\label{cor:acceptance-scalar}
Let $G\in\GR_{m,n}(\Sigma)$ and $A_G=\bar\delta(G)$. Write
$I^{(m)}\subseteq Q^m$ and $T^{(n)}\subseteq Q^n$ for the length-$m$ and
length-$n$ parts of the initial and final rational languages. Then $G$ is
accepted in the Boolean interpretation if and only if
\[
A_G\cap\bigl(I^{(m)}\times T^{(n)}\bigr)\neq\varnothing.
\]
Represent the allowed input and output words by their characteristic vectors $[I^{(m)}]\colon0\to m$ and $[T^{(n)}]\colon n\to0$. In the Boolean matrix interpretation, identify $A_G$ with its characteristic matrix. Acceptance is then equivalent to the scalar
\[
[I^{(m)}]\comp A_G\comp[T^{(n)}]
\]
having value $1$.

For weights in $K$, assign a $K$-matrix to each label and use the canonical structural matrices. Evaluating $G$ with these choices gives a matrix $A_G^K=\bar\delta_K(G)$. Put $1_K$ on the allowed boundary words and $0_K$ elsewhere. The resulting boundary vectors give the quantitative value
\[
[I^{(m)}]_K\comp A_G^K\comp[T^{(n)}]_K.
\]
This scalar sums the graph weights over all allowed boundary pairs.
\end{corollary}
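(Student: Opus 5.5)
We prove the Boolean statement first and then obtain the quantitative one by the same calculation in $K$. By Definition~\ref{def:rel-kmat} and the graphoid-automaton definition recalled in Proposition~\ref{prop:automata-functors}, $G$ is accepted exactly when some pair $(x,y)$ satisfies $x\in I^{(m)}$, $y\in T^{(n)}$ and $(x,y)\in A_G$. This is precisely the condition $A_G\cap(I^{(m)}\times T^{(n)})\neq\varnothing$. No categorical input is needed here beyond the fact that $A_G=\bar\delta(G)$ is well defined on the class of $G$ in $\GR(\Sigma)$.

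For the matrix reformulation, we regard $[I^{(m)}]$ as the relation $\{(\varepsilon,x):x\in I^{(m)}\}\subseteq Q^0\times Q^m$ and $[T^{(n)}]$ as $\{(y,\varepsilon):y\in T^{(n)}\}$. Corollary~\ref{cor:chi-models} shows that $\chi$ preserves composition, so the Boolean scalar $[I^{(m)}]\comp A_G\comp[T^{(n)}]$ is the characteristic matrix of the composite relation $0\to0$. Expanding the two matrix products over the intermediate words gives
\[
\bigvee_{x\in Q^m,\,y\in Q^n}[x\in I^{(m)}]\wedge (A_G)_{x,y}\wedge[y\in T^{(n)}].
\]
This entry equals $1$ exactly when a witnessing pair exists, which is the intersection condition above.

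For weights, $A_G^K=\bar\delta_K(G)$ is well defined by Proposition~\ref{prop:automata-functors}. That proposition relies on the special Frobenius equations for the canonical matrices, which we take as given from the later result it cites. The same expansion in $K$, using associativity of matrix multiplication and the fact that $Q$ is finite so all sums are finite, gives
\[
\sum_{x\in I^{(m)},\,y\in T^{(n)}}(A^K_G)_{x,y}.
\]
This is the claimed sum of the graph weights over the allowed boundary pairs.

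The only step requiring care is independence of the chosen expression for $G$, and this is supplied by the free property. The rest is a direct unfolding of matrix composition.
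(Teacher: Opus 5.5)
Your proposal is correct and follows the argument the paper intends: the paper gives no separate proof, and treats the corollary as a direct unfolding of the acceptance condition and of matrix composition with the characteristic boundary vectors, with $A_G$ and $A_G^K$ well defined by the free property in Proposition~\ref{prop:automata-functors}. Your appeal to Corollary~\ref{cor:chi-models} does no harm but is not needed, since your explicit Boolean expansion already gives the equivalence.
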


Weighted boundary vectors may replace these characteristic vectors. They change how boundary words contribute to acceptance, while leaving the graph transition morphism unchanged.

\section{Normal forms}\label{sec:normalforms}

The cospan presentation specifies which boundary ports meet at the same vertex. To obtain a normal form for structural terms, we must also count the isolated internal vertices.

\begin{theorem}\label{thm:struct-nf}
A structural morphism $m\to n$ in the original graphoid $\GR(\varnothing)$ is
uniquely determined by
\begin{enumerate}[label=\textup{(\roman*)},leftmargin=2.2em]
\item a partition of its $m+n$ boundary ports into blocks, and
\item a natural number $c\geq 0$ counting the apex elements outside both boundary images, or equivalently the isolated internal vertices.
\end{enumerate}
Equivalently, the morphism is a finite cospan $m\to A\leftarrow n$, considered
up to isomorphism of its apex $A$. Thus two structural terms are equal under the original graphoid equations precisely when their boundary partitions and isolated-vertex counts agree.
\end{theorem}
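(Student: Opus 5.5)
The plan is to read the statement off Theorem~\ref{thm:presentation}, which identifies $\GR(\varnothing)$ with $\Cospan(\FinSet)$. Two structural terms are therefore equal under the original graphoid equations exactly when their cospans $m\to A\leftarrow n$ are isomorphic. This gives the ``equivalently'' clause directly. What remains is to show that an isomorphism class of finite cospans is determined by the two invariants (i) and (ii), and that every pair of such invariants is realised.

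For the invariance direction, I will take a cospan $f\colon\{1,\dots,m\}\to A\leftarrow\{1,\dots,n\}\colon g$ and form the partition of the $m+n$ boundary ports by the fibres of $[f,g]$. I will also form the number $c=|A\setminus(\operatorname{im}f\cup\operatorname{im}g)|$. An isomorphism of apices commuting with the legs preserves both fibres and the complement of the image. So both invariants are well defined on isomorphism classes.

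For the converse, I will take two cospans with the same partition and the same $c$. The blocks of the partition correspond bijectively to the points of $\operatorname{im}f\cup\operatorname{im}g$ in each apex, so there is a canonical bijection between these images that commutes with the legs. Since $c$ agrees, I can extend it by an arbitrary bijection between the complements. This gives an isomorphism of cospans.

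For realisation, given any partition $P$ and any $c\geq0$, I take $A=P\sqcup\{1,\dots,c\}$ with each port sent to its block. The spider description in the proof of Theorem~\ref{thm:presentation} then yields an explicit term for this cospan: one spider $S_{p,q}$ per block, $c$ copies of $S_{0,0}=\dunit\comp\dcounit$, and a boundary permutation.

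The only delicate point is to make sure that the correspondence used is between terms modulo the graphoid equations and cospans, and not merely a functor. This is exactly the content of Theorem~\ref{thm:presentation} together with Theorem~\ref{fact:freeprop}, via the equivalence of universal properties. I will also note that in a \PROP{} the objects are fixed, so the equivalence is an isomorphism on hom-sets. With that in hand, the rest is the elementary finite-set bookkeeping above.
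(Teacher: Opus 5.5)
Your proposal is correct and follows the paper's proof. Both pass to cospans via Theorem~\ref{thm:presentation} and check that the fibre partition and the number of unhit apex points are isomorphism invariants. Both then realise any given data by an apex with one point per block plus $c$ extra points, and match two cospans with the same data by pairing image points through their blocks and the remaining $c$ points arbitrarily.
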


\begin{proof}
By Theorem~\ref{thm:presentation}, a structural morphism is represented by a
cospan
\[
\{1,\ldots,m\}\xrightarrow{\ell} A\xleftarrow{r}\{1,\ldots,n\}.
\]
For $a\in A$, its incident boundary ports form the set
\[
B_a=
\{\,i_{\mathrm{in}}:1\leq i\leq m,\ \ell(i)=a\,\}
\cup
\{\,j_{\mathrm{out}}:1\leq j\leq n,\ r(j)=a\,\}.
\]
The nonempty sets $B_a$ partition the boundary. The remaining apex elements are counted by
\[
c=\#\{a\in A: a\text{ is not hit by either boundary map}\}.
\]
Both data are invariant under isomorphism of the apex.

Conversely, suppose the partition and the number $c$ are given. Form an apex with one element for each block and $c$ additional elements. Map each boundary port to the element for its block. This constructs a cospan with the required data. In any other cospan with these data, the apex elements reached by the boundary maps are in bijection with the same blocks. The remaining $c$ elements may be matched in any order. These bijections give an isomorphism of cospans.
\end{proof}

\begin{corollary}\label{cor:core-nf}
A structural morphism $m\to n$ in $\GRcore(\varnothing)$ is uniquely determined
by a partition of its $m+n$ boundary ports. Two structural terms are equal in
this quotient if and only if they induce the same boundary partition.
\end{corollary}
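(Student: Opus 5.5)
The corollary will follow by combining Theorem~\ref{thm:struct-nf} with the second half of Theorem~\ref{thm:presentation}, which identifies $\GRcore(\varnothing)$ with $\Corel$. I will argue directly through the normal form, so that the role of the bone law is explicit.

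First, Theorem~\ref{thm:struct-nf} gives the normal form of an arbitrary structural term in $\GR(\varnothing)$. It is a tensor product of spiders $S_{p,q}$, one for each block of the boundary partition, together with $c$ copies of the closed scalar $S_{0,0}=\dunit\comp\dcounit$, with boundary permutations restoring the order of the ports. By interchange and the unit law $e_0\bx f=f$, the scalars can be collected into a factor $(\dunit\comp\dcounit)^{\bx c}$. Applying the bone law (B) $c$ times replaces this factor by $e_0$. Hence in $\GRcore(\varnothing)$ every structural term equals the normal form with $c=0$, which depends only on the partition. Two terms inducing the same partition are therefore equal in the quotient.

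For the converse, I need to show that equal terms in $\GRcore(\varnothing)$ induce the same partition. The boundary partition is an invariant of the cospan. Sending a cospan to its partition in $\Corel$ is compatible with composition and disjoint union: the generated equivalence on the glued positions is exactly the image-equality relation in the pushout apex, restricted to external ports. This map also sends both sides of (B) to the empty partition of $0\to0$. So the congruence ``same partition'' contains the graphoid equations and (B), and the partition descends to a well-defined function on $\GRcore(\varnothing)$. The same conclusion can be read off directly from the corelation presentation in Theorem~\ref{fact:freeprop}.

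The main point needing care is the compatibility of the partition map with composition. In particular, vertices that become isolated after a pushout must contribute no blocks. This holds because the restriction to external positions discards them. Everything else is bookkeeping on the normal form.
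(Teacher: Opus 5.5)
Your proof is correct, and its first direction matches the paper's: you write the normal form as $(P,c)$ and use (B) to reduce it to $(P,0)$, which depends only on $P$ by Theorem~\ref{thm:struct-nf}. Collecting the closed spiders into a single factor is unnecessary, because (B) can be applied to each occurrence of $\dunit\comp\dcounit$ in place. If you do collect them, moving a scalar past a wire uses (G15) with $m=n=0$, which gives $f\bx e=e\bx f$ for $f\in M_{0,0}$; interchange alone does not suffice.

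The two routes differ in the converse. The paper invokes the corelation presentation $\GRcore(\varnothing)\simeq\Corel$ from Theorem~\ref{thm:presentation}, which rests on the theorem of Coya and Fong. You instead check directly that the boundary-partition map $\Cospan(\FinSet)\to\Corel$ respects composition and disjoint union and sends both sides of (B) to the empty partition of $0\to0$, so it factors through the bone quotient. This needs only the elementary soundness half of the corelation presentation. Together with your first direction, it re-derives $\GRcore(\varnothing)\simeq\Corel$ from the cospan normal form rather than assuming it. The price is the pushout bookkeeping, which you correctly identify as the step requiring care.
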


\begin{proof}
Write the cospan normal form as $(P,c)$, where $P$ is its boundary partition. Deleting an isolated internal vertex leaves $P$ unchanged, so the bone law gives $(P,c+1)=(P,c)$ for all $c\geq0$. Thus every normal form reduces to $(P,0)$. Distinct partitions remain distinct by the corelation presentation of Theorem~\ref{thm:presentation}.
\end{proof}

To compute this normal form, we use a union-find structure to keep track of the port occurrences attached to each vertex.

\begin{proposition}\label{prop:nf-complexity}
Let a structural term be given by a syntax tree of encoded size $N$, where
fixed structural generators and nodes for sequential or parallel composition
have unit cost, and an identity leaf $e_k$ has cost $k$. Its cospan normal form can be computed in
$O(N\alpha(N))$ time using union-find, where $\alpha$ is the inverse Ackermann
function. Once the normal forms have been computed, their boundary partitions
can be compared in time linear in the number of boundary ports. Comparison in
the original cospan calculus also requires comparison of the numbers of
boundary-free components.
\end{proposition}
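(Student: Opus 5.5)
The plan is to evaluate the syntax tree bottom-up, keeping for each subterm $t\colon m\to n$ a \emph{port array}: a list of $m$ input handles and $n$ output handles, each a node of a global union-find structure. Every node of the structure stands for a provisional vertex. We also keep a counter of provisional vertices created and a flag marking which of them are ever reached by a boundary handle at the end. The leaves are handled as follows. A generator $d_{pq}$ creates one fresh node and uses it for all its $p+q$ handles. The swap $\swp$ creates two nodes with crossed output handles. An identity leaf $e_k$ creates $k$ nodes, each shared by input $i$ and output $i$, at cost $O(k)$, which matches its encoded cost. A parallel node $t_1\bx t_2$ concatenates the input arrays and the output arrays. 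To keep this within unit amortized cost, we store arrays as linked lists or ropes, so that concatenation takes $O(1)$ and the total length is bounded by $N$. A sequential node $t_1\comp t_2$ performs $\mathrm{union}(r_j,\ell_j)$ for each of the $n$ common positions. It then keeps the inputs of $t_1$ and the outputs of $t_2$. This mirrors pushout composition in Definition~\ref{def:cospan-corel}, and by Theorem~\ref{thm:presentation} the resulting quotient of created nodes is the apex of the cospan of the term.

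The main obstacle is the cost accounting. A sequential node makes $n$ unions, and $n$ need not be bounded by a unit charge. We handle this by charging each union to the node it merges away. Every union that actually merges two distinct classes reduces the number of classes by one, and each class was created at a leaf whose cost paid for it. The total number of created nodes is at most $N$, because each generator creates at most two nodes and $e_k$ creates $k$ nodes at cost $k$. So there are at most $N$ successful unions, and the redundant find calls are bounded the same way: the number of wire positions joined is at most the number of handles, which is $O(N)$. The caveat is that ranks must be realized by leaves, since every handle originates at a leaf. The standard union-by-rank and path-compression bound then gives $O(N\alpha(N))$ for all operations. Walking the tree to access the positions of a sequential node is also linear, since the lists are traversed once when merged. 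If ropes make this delicate, we fall back to traversing the common interface, whose length is again charged to the handles that are consumed.

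At the root we read off the normal form of Theorem~\ref{thm:struct-nf}. We perform one find on each of the $m+n$ boundary handles and bucket them by representative, using an array indexed by node, to obtain the partition blocks. We then compute
\[
c=\#\{\text{classes}\}-\#\{\text{classes met by a boundary handle}\},
\]
where the first count is the number of created nodes minus the number of successful unions. For comparison, we canonicalize each partition by labelling its blocks in order of first occurrence along the list of ports (inputs, then outputs). This takes linear time, and two partitions are equal exactly when their label sequences coincide. By Corollary~\ref{cor:core-nf} this decides equality in the bone quotient. By Theorem~\ref{thm:struct-nf}, equality in the original calculus additionally requires the two values of $c$ to agree, which is a single comparison of integers.
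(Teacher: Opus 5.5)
Your proposal is correct and follows essentially the same route as the paper. Both use bottom-up union-find over leaf-created ports, constant-time concatenation of the exposed lists at $\bx$ nodes, and the observation that each port leaves the exposed lists after being united once at a sequential node, giving $O(N)$ union-find operations. Both then finish at the root with first-occurrence numbering of the boundary classes and a separate comparison of $c$.

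The handle-consumption bound by itself already covers every union call, so your separate charging of successful unions is redundant. There is also a small slip in that charging: $\swp$ creates two nodes at unit cost, so the number of created nodes is at most $2N$ rather than $N$. This is still $O(N)$ and does not affect the result.
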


\begin{proof}
Process the syntax tree from its leaves to the root. Each fixed generator introduces a bounded number of ports and apex points, and an identity $e_k$ introduces $k$ wires. Since the encoding charges $k$ for this identity, the leaves introduce $O(N)$ items in total. Store the exposed input and output lists so that a box product concatenates them without copying their entries.

At a composition, traverse the left output list and the right input list together, uniting each pair of matching ports. These ports become internal and leave the exposed lists, so each leaf port is traversed at most once. There are consequently $O(N)$ union-find operations, taking $O(N\alpha(N))$ time in total.

At the root, the classes reached by external ports give the boundary partition. Count the remaining classes to obtain $c$. To compare two partitions without depending on the names of their classes, traverse the boundary in its fixed order and number a class when it first occurs. Two partitions are equal exactly when the resulting sequences agree, which can be checked in linear time. For equality in the original cospan calculus, compare the two values of $c$ as well.

\end{proof}

For a labelled term, the hyperedge occurrences and ordered incidence lists must also be retained. Isomorphism of these labelled structures is a separate problem for which no complexity bound is asserted here.

\begin{corollary}\label{thm:dec-nf}
A morphism of $\GR(\Sigma)$ is represented by a finite vertex set with input
and output boundary maps, together with a finite set of labelled hyperedge
occurrences and their ordered source and target lists. Two morphisms of the
same rank are equal exactly when these incidence structures are isomorphic.

Such an isomorphism matches vertices and hyperedge occurrences bijectively.
Each boundary position must be attached to corresponding vertices in the two incidence structures, and
matched hyperedges must have the same label and corresponding vertices at
each position in their source and target lists. Distinct parallel occurrences
must be matched separately, even when their labels and incidence lists agree.

In $\GRcore(\Sigma)$, the same criterion applies after deleting all vertices
with neither a boundary occurrence nor an incident hyperedge. Vertices
attached to hyperedges remain, even in components without an external
boundary.
\end{corollary}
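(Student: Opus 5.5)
The statement is essentially a repackaging of Theorem~\ref{thm:decorated} and Corollary~\ref{cor:decorated-core}. I would derive it from these two results rather than argue from the equations directly.

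\textbf{Step 1: representation.} Theorem~\ref{thm:decorated} identifies $\GR(\Sigma)$ with the \PROP{} of isomorphism classes of open $\Sigma$-hypergraphs of Definition~\ref{def:open-hyp}. Each morphism therefore has a representative consisting of:
\begin{itemize}
\item a finite apex $A$;
\item boundary maps $\ell\colon\{1,\ldots,m\}\to A$ and $r\colon\{1,\ldots,n\}\to A$;
\item a finite set $E$ of hyperedge occurrences, each with a label and ordered source and target lists in $A$.
\end{itemize}
Two terms are equal in $\GR(\Sigma)$ exactly when their representatives lie in the same isomorphism class.

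\textbf{Step 2: unpacking isomorphism.} I would spell out what an isomorphism of open hypergraphs is. It is a pair of bijections $\phi\colon A\to A'$ and $\psi\colon E\to E'$ satisfying the following conditions:
\begin{itemize}
\item $\phi\circ\ell=\ell'$ and $\phi\circ r=r'$, so each boundary position goes to corresponding vertices;
\item for every $h\in E$, the occurrence $\psi(h)$ has the same label as $h$;
\item applying $\phi$ entrywise to the source and target lists of $h$ gives the source and target lists of $\psi(h)$.
\end{itemize}
Because $E$ is a set of occurrences rather than a multiset of labelled lists, $\psi$ must match parallel occurrences one by one. This is the clause about distinct parallel occurrences in the statement. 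The boundary maps need not be injective, so the conditions on $\phi$ are stated pointwise and make no assumption of injectivity.

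\textbf{Step 3: the quotient $\GRcore(\Sigma)$.} Corollary~\ref{cor:decorated-core} shows that the bone congruence is exactly the relation \[\text{``isomorphic after deleting every vertex with neither a boundary occurrence nor an incident hyperedge.''}\] Combining this with Step~2 gives the criterion for the quotient. The proof of that corollary also explains why vertices carrying hyperedge ports survive: such a vertex is never the closed spider $S_{0,0}$.

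\textbf{Main obstacle.} The only delicate point is to confirm that ``isomorphism'' in the cited hypergraph presentation really means the occurrence-level notion of Step~2. The alternative would be a coarser notion that identifies parallel edges, and I need to rule it out. I would settle this by a direct invariant argument. Consider $k$ parallel copies of a label $\sigma$, for example $\sigma$ versus $\sigma\bx\sigma$ composed with copying and comparing maps along the boundary. Evaluate them in $\KMat(Q)$ over $K=\mathbb N$, sending $\sigma$ to the all-ones matrix. The values are scalars depending on $k$ (powers of the all-ones entry counts), so different $k$ give different morphisms. Since $\mathbb N$ is not idempotent, the free structure cannot collapse parallel occurrences. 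Together with the freeness statement of Theorem~\ref{thm:decorated}, this confirms that the multiset of occurrences is retained.
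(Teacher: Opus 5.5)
Your Steps~1--3 are essentially the paper's proof. The paper also reads the representation and the isomorphism criterion off Theorem~\ref{thm:decorated}, and obtains the quotient criterion from Corollary~\ref{cor:decorated-core}, noting that vertices carrying hyperedge ports are never closed spiders. Your Step~2 only spells out the data of an isomorphism that the paper leaves implicit.

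\textbf{The extra verification in your last paragraph fails as written.} Take $M_\sigma$ to be the all-ones matrix over $\mathbb N$. Then $k$ parallel copies of $\sigma$ joined by copy and compare have entry $M_\sigma(x,y)^k=1$ at every $(x,y)$, exactly like a single copy. Closing the boundary gives $|Q|^{|V|}$, again independent of $k$. What collapses parallel occurrences in $\Rel(Q)$ is idempotency of \emph{multiplication} ($R\cap R=R$), not of addition. The entries of the all-ones matrix are multiplicatively idempotent, so the failure of additive idempotency in $\mathbb N$ does not help you. A working variant uses a one-element state set, sends $\sigma$ to $(2)$ and every other label to $(1)$. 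By the state-sum formula, each graph then evaluates to $2$ raised to its number of $\sigma$-occurrences.

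\textbf{The check is also unnecessary.} Definition~\ref{def:open-hyp} explicitly allows distinct occurrences with the same label and the same incidence lists. Theorem~\ref{thm:decorated} identifies $\GR(\Sigma)$ with isomorphism classes of exactly these structures. So the occurrence-level notion of isomorphism is part of the cited result, not something you must confirm separately. Moreover, an invariant that separates one particular pair of graphs could not by itself establish the general isomorphism criterion. Drop that paragraph and your argument is complete.
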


\begin{proof}
By Theorem~\ref{thm:decorated}, a term represents an open labelled hypergraph.
At each vertex, the Frobenius equations combine the structural maps into a
single spider. During this reduction, each boundary position and each input
or output port of a hyperedge remains attached to the same vertex. The
resulting incidence structure therefore contains exactly the data used in
the isomorphism criterion of that theorem. In particular, the original
calculus retains vertices whose spiders have no legs.
Corollary~\ref{cor:decorated-core} shows that the bone law deletes exactly
these vertices, giving the criterion for the quotient.
\end{proof}

Under the canonical copy and compare interpretation, all ports at a vertex carry the same state. Assigning one state to each vertex therefore fixes the ordered input and output words at every hyperedge. These words select its transition entry. We obtain the matrix entry of the graph by multiplying these entries and summing over assignments compatible with the boundary words.

\begin{proposition}\label{prop:state-sum}
Let \(H:m\to n\) be an open \(\Sigma\)-hypergraph with vertex set \(V\) and boundary maps
\[
\ell:\{1,\ldots,m\}\to V,
\qquad
r:\{1,\ldots,n\}\to V.
\]
Let \(Q\) be a finite state set and \(K\) a commutative semiring. Interpret the structural generators by the canonical copy and compare matrices over \(K\), and each label \(\sigma\in\Sigma_{p,q}\) by a matrix
\[
M_\sigma:Q^p\times Q^q\to K.
\]
For each hyperedge occurrence \(a\), let \(\sigma_a\in\Sigma_{p_a,q_a}\) be its label, with ordered source and target lists
\[
(u^a_1,\ldots,u^a_{p_a}),
\qquad
(v^a_1,\ldots,v^a_{q_a}).
\]
Then, for boundary words \(x=(x_1,\ldots,x_m)\in Q^m\) and \(y=(y_1,\ldots,y_n)\in Q^n\),
\[
\sem{H}_{x,y}
=
\sum_{\substack{\alpha:V\to Q\\
\alpha(\ell(i))=x_i\ (1\le i\le m)\\
\alpha(r(j))=y_j\ (1\le j\le n)}}
\prod_a
M_{\sigma_a}
\Bigl(
(\alpha(u^a_1),\ldots,\alpha(u^a_{p_a})),
(\alpha(v^a_1),\ldots,\alpha(v^a_{q_a}))
\Bigr).
\]
The product is \(1_K\) when there are no hyperedges, and the sum is \(0_K\) when no assignment satisfies the boundary conditions. When \(V=\varnothing\), there is one empty assignment.
\end{proposition}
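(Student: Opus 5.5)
The natural strategy is to define the right-hand side as a functor and compare it with $\sem{-}$ on generators. For an open hypergraph $H$, write $Z(H)$ for the matrix given by the state-sum formula. By Theorem~\ref{thm:decorated}, $\GR(\Sigma)$ is the free special commutative Frobenius \PROP{} on the labels, and its morphisms are isomorphism classes of open $\Sigma$-hypergraphs. So it suffices to show three things. First, $Z$ is well defined on isomorphism classes. Second, $Z$ is a \PROP{} morphism into $\KMat(Q)$. Third, $Z$ agrees with $\sem{-}$ on the structural generators and on each label. The free property, which is also how Proposition~\ref{prop:automata-functors} defines $\sem{-}$, then forces $Z=\sem{-}$ on every morphism.

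Well-definedness is routine. An isomorphism of open hypergraphs is a bijection of vertices and hyperedge occurrences that respects labels, incidence lists and boundary maps. Precomposing assignments with it gives a bijection of compatible assignments that preserves every factor. Generators are checked directly from the cospans in the proof of Theorem~\ref{thm:presentation}:
\begin{itemize}
\item For $\dmul$, there is one vertex and no hyperedges. An assignment is compatible with $(x_1x_2,y)$ exactly when $x_1=x_2=y$, so the entry is $1_K$ if all three states agree and $0_K$ otherwise. This is the compare matrix.
\item $\dunit$, $\dcounit$ and $\dcomul$ are handled identically. For $\dunit$ no constraint is imposed, so every entry is $1_K$.
\item For $\swp$, the two vertices are forced to take the values $x_1=y_2$ and $x_2=y_1$.
\item For a single hyperedge labelled $\sigma$, each port has its own vertex attached to the boundary. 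There is exactly one compatible assignment, and its weight is $M_\sigma(x,y)$.
\end{itemize}

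Monoidality is also routine. Under disjoint union of hypergraphs, assignments on $V_1\sqcup V_2$ correspond to pairs of assignments, and the boundary conditions split. The product over hyperedges then factors, and distributivity turns the sum of products into a product of sums, which is the Kronecker entry. Identities $e_n$ have $n$ vertices, each forced to $x_i=y_i$, which gives the identity matrix. The empty graph gives $1_K$ through the single empty assignment.

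The main step is composition. Take $H:m\to n$ and $H':n\to k$. The composite has vertex set $V\sqcup V'$ modulo the equivalence generated by $r(j)\sim\ell'(j)$. I would show that assignments $\gamma$ on the pushout correspond bijectively to triples $(\alpha,\beta,w)$ with $w\in Q^n$, where $\alpha$ is compatible with $(x,w)$ and $\beta$ is compatible with $(w,z)$. Given $\gamma$, restrict it to $V$ and $V'$ and set $w_j=\gamma([r(j)])$. Conversely, given a triple, $\alpha$ and $\beta$ agree on each generating pair $r(j)\sim\ell'(j)$, because both take the value $w_j$ there. They therefore descend to the quotient by the universal property of the pushout of sets. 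The word $w$ is determined by $\gamma$, which makes the correspondence a bijection rather than an overcount. Hyperedges of the composite are the disjoint union of those of $H$ and $H'$, with incidences mapped into the quotient, so the weight of $\gamma$ is the product of the weights of $\alpha$ and $\beta$. Summing over $w$ first, and using distributivity and commutativity of $K$, gives $\sum_w Z(H)_{x,w}Z(H')_{w,z}$. Isolated vertices need no separate treatment: each contributes a free factor $\sum_{q}1_K=|Q|\cdot1_K$, which is consistent with the behaviour of the bone scalar.
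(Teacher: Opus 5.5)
Your proof is correct, but it runs in the opposite direction from the paper's.

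\textbf{The paper's route.} The paper starts from an expression for $H$, implicitly in spider normal form. It expands the matrix entry as a sum, over states on all wires of that expression, of products of generator entries. It then argues that the copy and compare entries kill every term in which the ports at a vertex disagree. The surviving terms are therefore indexed by vertex assignments $\alpha\colon V\to Q$, with a closed spider $\dunit\comp\dcounit$ contributing its one free intermediate state.

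\textbf{Your route.} You define the state sum $Z$ directly on open hypergraphs and prove it is a \PROP{} morphism. You then conclude $Z=\sem{-}$ because both agree on the generators and every hypergraph is built from them.

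\textbf{What your route buys.}
\begin{itemize}
\item The crux is your pushout bijection between assignments $\gamma$ on the composite and triples $(\alpha,\beta,w)$. This makes precise a point the paper leaves informal: once vertex states are fixed, the states on the internal wires of an expression are forced. So the expansion has exactly one nonzero term per vertex assignment, with no overcounting.
\item Consequently your argument does not depend on the spider theorem. Nor does it depend on how cups, caps and swaps wire the hyperedges in any particular term.
\item It also yields something extra. $Z$ is functorial on the hypergraph \PROP{}, which Theorem~\ref{thm:decorated} identifies with $\GR(\Sigma)$. Hence the canonical copy and compare matrices automatically satisfy all graphoid equations. The existence of $\sem{-}$, the content of Theorem~\ref{thm:special-lift}, thus becomes a byproduct rather than an input.
\end{itemize}

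\textbf{What the paper's route buys.} It is shorter and reads the formula directly off an expression. This makes the source of the factor $|Q|\cdot1_K$ for an isolated vertex immediately visible: it is the sum over the intermediate state in $\dunit\comp\dcounit$.
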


\begin{proof}
Fix the boundary words \(x,y\) and expand an expression for \(H\). Sequential composition sums over the state words on joined wires, while parallel composition multiplies entries using the Kronecker product. Thus the required entry is a sum of products of generator entries.

We can simplify this sum by using the spider normal form. At a vertex with attached ports, the copy and compare matrices require all these ports to carry the same state. Any choice that violates this requirement contributes \(0_K\). We may therefore describe the choices satisfying these constraints by a single state \(\alpha(v)\) at each vertex \(v\). An isolated vertex also requires one state choice: although its spider has no legs, the expression \(\dunit\comp \dcounit\) sums over the intermediate state between these two maps. Thus the choices allowed by the structural maps correspond exactly to functions \(\alpha:V\to Q\).

The fixed boundary words restrict which functions we may use. Since input position \(i\) is attached to \(\ell(i)\), we need \(\alpha(\ell(i))=x_i\), and similarly \(\alpha(r(j))=y_j\) at output position \(j\). If two boundary positions attached to the same vertex prescribe different states, these requirements are inconsistent. There is then no compatible assignment, so the matrix entry is \(0_K\).

For a compatible assignment \(\alpha\), the states on the ordered source and target lists select the corresponding entry of \(M_{\sigma_a}\) at occurrence \(a\). All entries of the structural generators used by this assignment are \(1_K\). Its weight is therefore the product of the selected transition entries, with one factor for each hyperedge occurrence. Summing these weights over the compatible assignments gives the formula. Since multiplication in \(K\) is commutative, the product is independent of the order in which the hyperedge occurrences are listed.
\end{proof}

At an isolated internal vertex, neither a boundary condition nor a hyperedge transition restricts the state. Every state gives the same contribution from the rest of the graph, so summing over this state introduces the factor
\[
\sum_{q\in Q}1_K=|Q|\cdot1_K.
\]

We can also use the formula to evaluate a graph a few hyperedges at a time. Suppose that some hyperedges have already been processed. If a vertex is also attached to an unprocessed hyperedge, we still need its state to evaluate that hyperedge later. We therefore keep a table with an entry for each assignment of states to these shared vertices. Each entry is the sum of the weights from the processed hyperedges, after summing over the states at vertices that are no longer needed. The prescribed boundary values restrict the assignments throughout this calculation.

When we include another hyperedge, we extend the assignments to any new vertices and multiply by the corresponding transition entry. Once every hyperedge attached to a vertex has been processed, we can sum over its state, subject to its prescribed boundary value if it has one. Isolated internal vertices supply the factors calculated above. This procedure evaluates the state sum without forming the full Kronecker product. Because the ports at a vertex share one state, the intermediate table needs only one state index for that vertex. The order of processing therefore matters: keeping fewer vertices shared with the unprocessed hyperedges requires smaller tables.

\section{Soundness and completeness for connectivity}\label{sec:completeness}

We now ask when equality of interpreted values implies that two expressions can be proved equal from the graphoid equations. We first establish soundness for Frobenius models, then examine its converse for structural terms under the canonical relational interpretation.

\begin{theorem}\label{thm:soundness}
Every equation derivable from the graphoid axioms is valid in every special
commutative Frobenius model. After adding the bone law, every derivable equation
is valid in every extraspecial commutative Frobenius model, including the
canonical relational model $\Rel(Q)$ for every nonempty $Q$.
\end{theorem}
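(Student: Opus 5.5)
The proof is a soundness-by-induction argument packaged through the universal property established earlier. Fix a special commutative Frobenius model: a \PROP{} $\mathcal C$ whose generating object carries a special commutative Frobenius structure, together with arbitrary interpretations of the labels of $\Sigma$. By the converse half of Lemma~\ref{lem:dictionary}, the interpreted structural elements satisfy all fifteen graph equations (G1)--(G15) in $\mathcal C$. Hence the assignment of generators, structural and labelled, determines a graphoid morphism $\GR(\Sigma)\to\mathcal C$ by freeness. Equivalently, by Theorem~\ref{thm:decorated}, it determines a \PROP{} morphism from the free special commutative Frobenius \PROP{} with the extra generators.

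Validity then follows formally. An equation $t_1=t_2$ derivable from the graphoid axioms means $t_1$ and $t_2$ denote the same element of $\GR(\Sigma)$, so their images under this morphism coincide. I would also note the direct alternative: induction on derivations. Each axiom holds in $\mathcal C$ by Lemma~\ref{lem:dictionary}. Equality is reflexive, symmetric and transitive, and it is preserved by $\comp$ and $\bx$ because these are interpreted as functions in $\mathcal C$, which gives congruence. This handles rules applied inside contexts. The one step needing care is (G15), which is a schema over all $f\in M_{m,n}$, including labelled $f$. Its validity requires naturality of the symmetry in $\mathcal C$ for arbitrary morphisms, which a \PROP{} supplies. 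So the point to check is that the permutations built from $\swp$ in the model agree with the model's own symmetry. This holds because $\swp$ is interpreted as the model's symmetry and $s_{m,1}$ is then the standard decomposition of the block symmetry.

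For the extraspecial part, add (B) to the axioms. In an extraspecial model, $\eta\comp\varepsilon=\id_I$ is exactly (B) under the identification in Lemma~\ref{lem:dictionary}. The morphism therefore respects the extra relation and factors through the quotient $\GRcore(\Sigma)$ by its universal property, as in the proof of Corollary~\ref{cor:factor-core}. Derivable equations in the extended system consequently hold.

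Finally, for $\Rel(Q)$ with $Q$ nonempty, Proposition~\ref{prop:rel-classical} shows that the canonical $D$-set is an extraspecial commutative Frobenius monoid in the \PROP{} $\Rel(Q)$. The general statement then applies with arbitrary transition relations for labels.

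I expect no real obstacle. The only subtle point is the schema (G15) together with the rest of the symmetry bookkeeping, and this is exactly what Lemma~\ref{lem:dictionary}'s converse direction provides.
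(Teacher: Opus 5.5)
Your proposal is correct and follows the paper's argument: the paper checks every axiom in the model via Lemma~\ref{lem:dictionary}, extends to all derivable equations by induction on derivations (closure under substitution, composition and monoidal product), adds the bone law for extraspecial models, and covers $\Rel(Q)$ through Proposition~\ref{prop:rel-classical}. Your freeness formulation packages that same induction, and your remark on checking (G15) against the model's own symmetry spells out a point the paper leaves inside the lemma.
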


\begin{proof}
Let $F$ interpret terms in a special commutative Frobenius model. Every graphoid axiom holds by Lemma~\ref{lem:dictionary}. Substitution of morphisms of the required types, composition and monoidal product preserve equality. Induction on a derivation therefore gives $F(f)=F(g)$ whenever $f=g$ is derivable.

In an extraspecial model the bone law also holds, so the same induction applies to the enlarged system. The canonical relational model is extraspecial for nonempty $Q$ by Proposition~\ref{prop:rel-classical}.
\end{proof}

In the canonical relational model on a nonempty state set, an isolated internal vertex can always be assigned a state and imposes no boundary restriction. Relational evaluation therefore forgets the isolated-vertex count in Theorem~\ref{thm:struct-nf}. We compare this evaluation with the bone quotient, whose normal forms retain only the boundary partition. For two states, the resulting faithful interpretation is the Boolean case of \citet[Theorem~3.11]{EastJohnsonKambites2026}. We include the separation argument to express this fact directly as completeness for the graphoid equations.

\begin{theorem}\label{thm:completeness}
Let $|Q|\geq 2$. For structural terms $s,t\colon m\to n$, the following
conditions are equivalent:
\begin{enumerate}[label=\textup{(\alph*)},leftmargin=2.2em]
\item $s=t$ is derivable from the graphoid axioms together with the bone law.
\item $s$ and $t$ determine the same corelation on their $m+n$ boundary ports.
\item $s$ and $t$ have the same interpretation in the canonical relational
model $\Rel(Q)$.
\end{enumerate}
\end{theorem}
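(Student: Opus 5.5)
I will prove the cycle (a)$\Rightarrow$(c)$\Rightarrow$(b)$\Rightarrow$(a), using the normal forms already established.

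For (a)$\Rightarrow$(c), I apply soundness. By Proposition~\ref{prop:rel-classical}, the canonical structure on $\Rel(Q)$ is an extraspecial commutative Frobenius monoid, since $Q$ is nonempty. Theorem~\ref{thm:soundness} then says every equation derivable with the bone law holds in $\Rel(Q)$.

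For (b)$\Rightarrow$(a), I use Corollary~\ref{cor:core-nf}: in $\GRcore(\varnothing)$ a structural morphism is determined by its boundary partition. Hence equal corelations give equality in the bone quotient, which is derivability from the graphoid axioms plus (B).

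The substantive step is (c)$\Rightarrow$(b), where I identify the relational value explicitly. I specialise the state-sum formula of Proposition~\ref{prop:state-sum} to the Boolean semiring with no hyperedges; equivalently, I argue directly through the cospan normal form $(P,c)$ of Theorem~\ref{thm:struct-nf}. The claim is that the relation interpreting $s$ is
\[
R_P=\{(x,y)\in Q^m\times Q^n : \text{the boundary word } xy \text{ is constant on every block of } P\}.
\]
\begin{itemize}[leftmargin=2.2em]
\item A pair $(x,y)$ lies in the relation iff there is some $\alpha\colon A\to Q$ agreeing with the boundary words.
\item Such an $\alpha$ exists iff the boundary states are constant on each block.
\item The $c$ isolated apex points can always be assigned a state because $Q\neq\varnothing$, so they impose no condition.
\end{itemize}
Alternatively, I can check this identity on generators and show it is preserved by composition and tensor. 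That route is essentially the computation behind Proposition~\ref{prop:rel-classical}.

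It remains to show that $P\mapsto R_P$ is injective when $|Q|\geq2$. Suppose $P\neq P'$. Then, after possibly swapping the two partitions, there are positions $i,j$ in the same block of $P$ but in different blocks of $P'$. Pick distinct states $a,b\in Q$. Define a boundary word that is $a$ on the $P'$-block containing $i$ and $b$ everywhere else. This word is constant on every block of $P'$, so it lies in $R_{P'}$. It is not constant on the $P$-block containing $i$ and $j$, so it does not lie in $R_P$. Hence $R_P\neq R_{P'}$, so equal interpretations force equal corelations.

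The main obstacle is the clean identification of the relational value with $R_P$, independently of the term chosen. I expect to settle it by combining Theorem~\ref{thm:presentation}, which makes the interpretation a functor out of $\Corel$, with the state-sum description; the separation argument is routine. I will also remark that the hypothesis $|Q|\geq2$ is necessary: when $|Q|=1$, every $R_P$ is the full relation, so (c) cannot distinguish partitions.
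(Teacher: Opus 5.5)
Your proposal is correct and matches the paper's proof in substance. The paper likewise gets (a)$\Leftrightarrow$(b) from the corelation presentation and (b)$\Leftrightarrow$(c) by identifying the relational value with $R_P$ and separating distinct partitions with two states. Arranging the steps as a cycle with soundness for (a)$\Rightarrow$(c), and choosing the separating word as $a$ on the $P'$-block of $i$ and $b$ elsewhere, are only cosmetic variations. Your state-sum justification that the value is $R_P$ makes explicit a step the paper simply asserts.
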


\begin{proof}
By the corelation part of Theorem~\ref{thm:presentation}, adding the bone law
identifies precisely the terms with the same boundary partition. This proves
the equivalence of (a) and (b). We next show that the relational interpretation
determines that partition.

For a partition $P$ of the boundary ports $B$, the interpreted relation consists of the state assignments constant on each block:
\[
R_P
=
\{\alpha:B\to Q:
 b\sim_P b'\Rightarrow \alpha(b)=\alpha(b')\}.
\]
Different blocks may receive the same state, and equal partitions clearly give equal relations.

If $P\neq P'$, then, after exchanging their roles if necessary, there are ports $i,j$ such that
\[
i\sim_P j
\qquad\text{and}\qquad
 i\not\sim_{P'}j.
\]
Choose distinct states $0,1\in Q$. Assign $0$ to the $P'$-block of $i$, $1$ to the $P'$-block of $j$, and a constant state to each remaining block. This assignment belongs to $R_{P'}$ but violates the equality required by $P$ at $i,j$. Thus $R_P\neq R_{P'}$, proving (b) equivalent to (c).
\end{proof}

With one state, every partition admits the unique boundary assignment. For example,
$\{\{1_{\mathrm{in}},1_{\mathrm{out}}\}\}$ and
$\{\{1_{\mathrm{in}}\},\{1_{\mathrm{out}}\}\}$ are distinct corelations $1\to1$, but their relations agree on a singleton set. Hence, among nonempty state sets, at least two states are necessary and sufficient to separate corelations.

The canonical structural operations impose only equalities between states. We can therefore describe exactly the part of $\Rel(Q)$ that they generate.

\begin{proposition}\label{prop:limits}
For nonempty $Q$, the sub-\PROP{} of $\Rel(Q)$ generated by the canonical
classical structure is exactly the image of $\Corel$. If $|Q|\geq2$, this is a
proper sub-\PROP{} of $\Rel(Q)$.
\end{proposition}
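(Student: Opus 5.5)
The plan has two parts. First, identify the generated sub-\PROP{} with the image of $\Corel$. Second, exhibit a relation outside that image when $|Q|\geq2$.

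For the first part, let $\mathcal S\subseteq\Rel(Q)$ be the sub-\PROP{} generated by the five canonical relations. It consists of all relations obtained from them by composition, monoidal product and units. By Proposition~\ref{prop:rel-classical} and Theorem~\ref{thm:soundness}, the canonical assignment satisfies the graphoid equations and the bone law. The universal property of $\GRcore(\varnothing)\simeq\Corel$ (Theorem~\ref{thm:presentation}) therefore gives a \PROP{} morphism $\Phi\colon\Corel\to\Rel(Q)$ sending the generators to the canonical relations. Every corelation is the image of a structural term, so $\Phi(\Corel)$ is generated by the canonical relations and equals $\mathcal S$. Both inclusions hold: $\mathcal S\subseteq\Phi(\Corel)$ because the image of a \PROP{} morphism is closed under the operations and contains the generators. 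The converse holds because each $\Phi(P)$ is a composite of generators. I will also note that $\Phi(P)=R_P$, the set of assignments constant on blocks, as computed in the proof of Theorem~\ref{thm:completeness}. This follows from the state-sum formula (Proposition~\ref{prop:state-sum}) with no hyperedges, reinterpreted over the Boolean semiring via Corollary~\ref{cor:chi-models}.

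For properness when $|Q|\geq2$, the argument is a counting or shape argument on $R_P$. Every $R_P$ is nonempty: the constant assignment lies in it because $Q$ is nonempty. So the empty relation $\varnothing\subseteq Q^0\times Q^1$ (or of any type) is not in the image. Alternatively, for a more informative witness, take a single state $q_0$ and the relation $\{(\varepsilon,q_0)\}\colon0\to1$. The only corelation $0\to1$ is the single block, with image all of $Q$, which differs from $\{q_0\}$ once $|Q|\geq2$. I will present the $\{(\varepsilon,q_0)\}$ witness, since it uses the hypothesis $|Q|\geq2$ in an essential way. The empty relation already works for any nonempty $Q$, so it would not explain why the hypothesis is there.

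The main obstacle is only bookkeeping. The identification $\Phi(P)=R_P$ must hold for arbitrary composites and not just for generators. This is handled by relying on functoriality of $\Phi$ together with the explicit description of $R_P$ already used in Theorem~\ref{thm:completeness}.
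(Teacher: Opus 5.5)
Your proposal is correct and follows essentially the paper's route: the generated sub-\PROP{} is identified with the image of $\Corel$ through the presentation, and properness comes from listing the corelations of a small type. You use the single corelation $0\to1$ with witness $\{(\varepsilon,q_0)\}$, while the paper uses the two corelations $1\to1$ with witness $\{(0,0)\}$. Your side remark is also right: every $R_P$ contains the constant assignments, so the empty relation is never generated, and properness therefore holds for every nonempty $Q$, including $|Q|=1$.
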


\begin{proof}
By the corelation normal form, every canonical structural expression has the relation $R_P$ determined by its boundary partition $P$. Conversely, a spider for each block, together with the required boundary permutations, represents $R_P$. Thus the generated sub-\PROP{} is the image of $\Corel$.

At type $1\to1$ there are only two boundary partitions. They give the equality relation $\{(q,q):q\in Q\}$ and the total relation $Q\times Q$. Choose a state $0\in Q$. When $|Q|\geq2$, the singleton relation $\{(0,0)\}$ is neither equality nor the total relation, so it lies outside the generated sub-\PROP{}.
\end{proof}

Labelled transitions may be assigned relations beyond those generated by the canonical structure. The equality criterion in Corollary~\ref{thm:dec-nf} concerns individual graphs in the free labelled calculus. Language equivalence asks a different question: whether two automata, with their chosen transitions and boundary languages, accept the same set of graphs. That question, and the related problem of minimization, requires a separate analysis.

\section{Semiring lifting}\label{sec:lifting}

Let $Q$ be finite, and let $K$ be a commutative semiring. The characteristic matrix of a relation has entry $1_K$ on its members and $0_K$ elsewhere. In a composite, matrix multiplication adds one contribution per intermediate assignment, while relational composition asks whether any such assignment exists.

\begin{theorem}\label{thm:special-lift}
Let $K$ be a commutative semiring and let $Q$ be a finite set. The
canonical copy/compare $D$-set in $\KMat(Q)$ satisfies the original graphoid
axioms. Its multiplication, unit, comultiplication and counit form a special
commutative dagger-Frobenius monoid, and its swap is the canonical symmetry.
\end{theorem}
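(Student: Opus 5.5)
The plan is to reduce everything to Lemma~\ref{lem:dictionary}. By its converse direction, once the swap matrix is shown to be a natural symmetry in $\KMat(Q)$ and the four copy/compare matrices form a special commutative Frobenius monoid, all fifteen graphoid equations follow. So I would first describe the matrices explicitly. Write $\dmul_{gh,k}=[g=h=k]$, $\dcomul_{g,hk}=[g=h=k]$, $\dunit_{\varepsilon,g}=1_K$, $\dcounit_{g,\varepsilon}=1_K$ and $\swp_{g_1g_2,h_1h_2}=[g_1=h_2][g_2=h_1]$, where $[\,\cdot\,]$ denotes $1_K$ or $0_K$. The dagger claims are immediate from these formulas: the transpose of $\dmul$ is $\dcomul$ and the transpose of $\dunit$ is $\dcounit$.

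For the symmetry, I would note that $\KMat(Q)$, with the Kronecker product on words, is a strict symmetric monoidal category. Its symmetry $m+n\to n+m$ is the permutation matrix that exchanges word blocks. Naturality follows by a direct entry computation: in $(f\otimes g)\comp\sigma$ and $\sigma\comp(g\otimes f)$, each entry is $f_{x,y}g_{x',y'}$ with the indices rearranged, and this uses commutativity of multiplication in $K$. The block symmetry with $m=n=1$ is exactly $\swp$, so (G1), (G2) and (G15) hold.

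For the Frobenius equations, the key observation is that every composite appearing in the monoid, comonoid, Frobenius and specialness laws has the following form. Each entry is a sum over intermediate words of products of indicator entries. In each such sum, at most one intermediate assignment gives a nonzero summand, because every intermediate state is forced to equal some boundary state. The exception is the unit and counit, which contribute the factor $1_K$ for every state. I would verify this for each law in turn:
\begin{itemize}
\item associativity of $\dmul$ gives $[g=h=k=l]$ on both sides;
\item in the unit law $(e\bx\dunit)\comp\dmul$, the only nonzero summand is the intermediate word $gh$ with $h=g$, so the entry is $[g=k]$;
\item both sides of the Frobenius law give $[\text{all four states agree}]$, each through a unique intermediate word;
\item specialness $\dcomul\comp\dmul$ sums over $uv$ the product $[g=u=v][u=v=h]$, which has exactly one nonzero term, giving $[g=h]$;
\item commutativity and cocommutativity are clear because $[g=h=k]$ is symmetric in $g,h$.
\end{itemize}
Alternatively, all of these are instances of Proposition~\ref{prop:state-sum} with no hyperedges. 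There a connected structural diagram with boundary has entry equal to the number of compatible assignments, either $0$ or $1$. I would avoid citing that proposition, however, since its proof uses the spider normal form that presupposes the present theorem.

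The main obstacle is to make sure that no equation silently depends on idempotent addition. Such dependence would occur wherever a summation index is unconstrained by the boundary. That happens exactly for a closed component, as in $\dunit\comp\dcounit=|Q|\cdot1_K$, and none of the special Frobenius axioms contains one. So I would check each axiom explicitly for uniqueness of the nonzero intermediate assignment, and this is where care is needed. Finally, the case $Q=\varnothing$ is handled by noting that all the matrices of positive rank are empty. Then every equation holds trivially, just as in the relational remark after Proposition~\ref{prop:rel-classical}.
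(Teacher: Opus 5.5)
Your proof is correct, but it is organized in the opposite direction from the paper's. The paper works with the graphoid equations themselves. For (G1)--(G14) it observes that fixing the external states leaves at most one compatible internal assignment, so each entry is $1_K$ or $0_K$ and the matrix equations are inherited from the corresponding relational ones. It then checks (G15) separately for an arbitrary weighted $f$ by a permutation argument, handles the dagger by transposition, and lets Lemma~\ref{lem:dictionary} supply the Frobenius reading. You instead verify only the Frobenius axioms, by explicit entry computations. You prove naturality of the general block symmetry of $\KMat(Q)$ once, and then obtain all fifteen equations from the converse half of the lemma.

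Your route buys independence from the relational model, and it replaces the paper's blanket uniqueness claim with a check of each axiom. Your block naturality also covers (G6), (G7), (G11), (G12) and (G15) in one computation. You are right not to invoke Proposition~\ref{prop:state-sum}, whose spider-based proof presupposes this theorem. The cost is reliance on the converse of the lemma. To make that step airtight, state explicitly that the recursively defined $s_{m,1}$ is the permutation matrix moving the first position past the next $m$ positions; this follows by direct computation or from the hexagon identity. That identification is what makes (G15) an instance of your naturality equation. The paper's version is shorter and addresses the theorem's first assertion, that the $D$-set satisfies the original graphoid axioms, directly.
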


\begin{proof}
Fix the external states on either side of an equation (G1)--(G14). The structural equalities determine at most one compatible internal assignment. Each matrix entry is therefore $1_K$ if that assignment exists and $0_K$ otherwise. Since the two sides agree relationally, they also agree as matrices, without any condition on addition in $K$.

For example, the specialness composite has entry
\[
(\dcomul\comp\dmul)_{g,h}
=
\sum_{(u,v)\in Q^2}
(\dcomul)_{g,(u,v)}(\dmul)_{(u,v),h}.
\]
Copying requires $u=v=g$ and comparison requires $u=v=h$, so a summand contributes precisely when
\[
u=v=g=h.
\]
There is one such pair if $g=h$ and none otherwise, giving the identity matrix.

Equation (G15) also has to hold for an arbitrary matrix $f$. Here the entries of $f$ may be any elements of $K$. The permutation matrices on either side merely reorder the coordinates, and the identity wire requires its two boundary states to agree. If they agree, both sides select the same entry of $f$. Otherwise both entries are zero. This proves naturality for arbitrary weights.

Transpose exchanges copy with compare and unit with counit. Thus the dagger equations also hold. The argument includes $Q=\varnothing$: every side of (G1)--(G14) is the unique matrix with its empty index set, and the permutation argument for (G15) remains valid.
\end{proof}

For the remainder of this section, assume that $Q$ is nonempty. For the bone composite, no external state fixes the choice made by the unit. All $|Q|$ states contribute, giving the following scalar condition.

\begin{theorem}\label{thm:bone-scalar}
For the canonical copy/compare structure in $\KMat(Q)$,
\[
\dunit\comp\dcounit=(|Q|\cdot 1_K)e_0.
\]
Consequently, for this fixed state set, the bone law holds if and only if
\[
|Q|\cdot 1_K=1_K.
\]
If $K$ is additively idempotent, the bone law holds for every nonempty finite
$Q$. Conversely, requiring the bone law for all nonempty finite state sets,
or for a single two-element state set, forces $K$ to be additively idempotent.
\end{theorem}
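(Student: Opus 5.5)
The plan is to compute the scalar $\dunit\comp\dcounit$ directly by matrix multiplication and then read off each consequence. The composite has type $0\to0$, so it is a $1\times1$ matrix indexed by $(\varepsilon,\varepsilon)$. Summing over the intermediate words $q\in Q^1=Q$ gives
\[
(\dunit\comp\dcounit)_{\varepsilon,\varepsilon}=\sum_{q\in Q}(\dunit)_{\varepsilon,q}(\dcounit)_{q,\varepsilon}=\sum_{q\in Q}1_K\cdot1_K=|Q|\cdot1_K.
\]
Here both canonical matrices have every entry equal to $1_K$, because the unit and counit relations are total. Since $e_0$ is the $1\times1$ matrix with entry $1_K$, this identifies the composite with $(|Q|\cdot1_K)e_0$. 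The bone law (B) asks for equality with $e_0$, so for fixed $Q$ it holds exactly when $|Q|\cdot1_K=1_K$. This is the same fact already used in Corollary~\ref{cor:factor-core}.

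For the additively idempotent direction, assume $1_K+1_K=1_K$. An induction on $n\ge1$ then gives $n\cdot1_K=1_K$. The base case is trivial, and the step is $(n+1)\cdot1_K=n\cdot1_K+1_K=1_K+1_K=1_K$. Hence the bone law holds for every nonempty finite $Q$.

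For the converse, it is enough to treat a two-element state set, since requiring the bone law for all nonempty $Q$ includes that case. With $|Q|=2$ the criterion gives $1_K+1_K=1_K$. Multiplying by an arbitrary $a\in K$ and using distributivity yields $a+a=a(1_K+1_K)=a\cdot1_K=a$, so $K$ is additively idempotent.

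None of these steps is hard. The only point needing care is to state clearly that idempotency of $1_K$ propagates to every element through distributivity. That propagation is what makes the single two-element state set sufficient. It is also why a condition such as $3\cdot1_K=1_K$ on its own would not give idempotency, which is consistent with the later remark about three states.
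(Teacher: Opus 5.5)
Your proposal is correct and follows the paper's proof. It uses the same scalar computation, the same observation that idempotency makes every nonempty sum of units equal $1_K$, and the same distributivity step from $1_K+1_K=1_K$ to $a+a=a$; the only difference is that you evaluate $\dunit\comp\dcounit$ directly as a $1\times1$ matrix product instead of citing the state-sum formula of Proposition~\ref{prop:state-sum}, which makes your version slightly more self-contained.
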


\begin{proof}
By Proposition~\ref{prop:state-sum}, each possible state of the isolated vertex contributes $1_K$, giving the scalar $|Q|\cdot1_K$. The bone law equates this with the value $1_K$ of the empty diagram.

If addition is idempotent, every nonempty finite sum of units equals $1_K$. Conversely, the bone law for two states gives
\[
1_K+1_K=1_K.
\]
Distributivity then gives, for every $a\in K$,
\[
a+a
=a\cdot1_K+a\cdot1_K
=a(1_K+1_K)
=a\cdot1_K
=a.
\]
\end{proof}

The bone law tells us when isolated internal vertices may be removed without changing the matrix. More generally, we can ask whether different numbers of isolated vertices give different matrices, and whether the matrix determines the boundary partition.

Take $Q=\{0,1\}$ and identify a boundary word with the set of positions carrying $1$. The matrix in \citet[Definition~3.1]{EastJohnsonKambites2026} tests whether the selected input and output positions together form a union of partition blocks, so it is exactly $\chi^K(R_P)$. Their twisted partition category retains the isolated-vertex count $c$, with composition adding the existing counts and any newly isolated middle components. Its representation $2^c\chi^K(R_P)$ \citep[Theorem~4.1]{EastJohnsonKambites2026} is therefore this two-state structural interpretation. The following criterion compares values of the same construction for arbitrary $Q$ and $K$, including coincident and vanishing scalar powers.

\begin{theorem}\label{thm:matrix-equality}
Let $Q$ be a nonempty finite state set, let $K$ be a commutative semiring, and put $\lambda=|Q|\cdot1_K$. A structural term $s\colon m\to n$ with normal form $(P,c)$ has canonical matrix
\[
\sem{s}=\lambda^c\chi^K(R_P),
\]
where $R_P$ consists of the boundary assignments that are constant on each block of $P$, $\chi^K(R_P)$ is its characteristic matrix over $K$, and $\lambda^0=1_K$.

Suppose that $|Q|\geq2$, and let $t\colon m\to n$ have normal form $(P',c')$. Then
\[
\sem{s}=\sem{t}
\quad\Longleftrightarrow\quad
\lambda^c=\lambda^{c'}
\quad\text{and}\quad
\bigl(\lambda^c=0_K\ \text{or}\ P=P'\bigr).
\]
If $Q$ has one element, every structural term of a fixed type has the same one-entry matrix $(1_K)$.
\end{theorem}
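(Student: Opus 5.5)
The plan has three steps: obtain the matrix formula from the state-sum description, prove the two directions of the equality criterion, and dispose of the one-state case.

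\emph{The matrix formula.} By Theorem~\ref{thm:struct-nf}, $s$ is represented by a cospan whose apex consists of one vertex for each block of $P$ together with $c$ isolated vertices. Apply Proposition~\ref{prop:state-sum} with no hyperedges, so every product is $1_K$. The entry $\sem{s}_{x,y}$ then counts assignments $\alpha\colon V\to Q$ that agree with the boundary words. A block vertex is hit by the boundary, so its state is forced. Such an $\alpha$ exists exactly when $(x,y)$ is constant on each block, that is, when $(x,y)\in R_P$. The $c$ isolated vertices are unconstrained and give $|Q|^c$ choices. Hence
\[
\sem{s}_{x,y}=\lambda^c\,\chi^K(R_P)_{x,y}.
\]
When $Q$ has one element, there is exactly one boundary word, and it is constant on every block. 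So the matrix is the single entry $\lambda^c=1_K^c=1_K$, which proves the last claim.

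\emph{The ``if'' direction.} Assume $\lambda^c=\lambda^{c'}$. If this common value is $0_K$, both matrices are zero. If instead $P=P'$, the two formulas coincide.

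\emph{The ``only if'' direction.} Assume $|Q|\geq2$ and $\sem{s}=\sem{t}$. First evaluate at a boundary assignment lying in both $R_P$ and $R_{P'}$, namely the constant assignment that gives every port the same state. Both characteristic entries are $1_K$ there, so $\lambda^c=\lambda^{c'}$.

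If this common value is $0_K$, there is nothing more to prove. Otherwise, suppose $P\neq P'$. Reuse the separating assignment from the proof of Theorem~\ref{thm:completeness}: after possibly swapping the two terms, it lies in $R_{P'}$ but not in $R_P$. At that entry, $\sem{t}$ equals $\lambda^{c'}=\lambda^c\neq0_K$, while $\sem{s}$ equals $0_K$. This contradicts $\sem{s}=\sem{t}$. The swap is harmless because the criterion is symmetric in $s$ and $t$ once $\lambda^c=\lambda^{c'}$ is known.

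The only delicate step is the first one: making sure that isolated vertices contribute a factor $\lambda$ each, and that the block vertices contribute no extra factor. This is exactly the remark following Proposition~\ref{prop:state-sum}, and I expect no further obstacle. In particular, nothing uses cancellation in $K$, since the argument only compares individual entries.
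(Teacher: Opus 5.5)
Your proposal is correct and matches the paper's proof step for step: the matrix formula comes from Proposition~\ref{prop:state-sum}, with the block vertices forced by the boundary and each of the $c$ isolated vertices contributing $\lambda$. The constant boundary assignment then forces $\lambda^c=\lambda^{c'}$, and the separating assignment from Theorem~\ref{thm:completeness} settles the case of a nonzero common scalar. The only cosmetic difference is that you evaluate directly at the separating entry, whereas the paper first deduces $R_P=R_{P'}$ from the supports of the two matrices and then applies the separation argument.
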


\begin{proof}
Fix the input and output words. A vertex reached by the boundary maps must carry the state prescribed at its boundary positions. If two of these positions prescribe different states at the same vertex, there is no compatible assignment, and the matrix entry is $0_K$. Otherwise the boundary words determine the states at all vertices reached by the boundary maps. Only the $c$ isolated internal vertices remain to be assigned states.

Each isolated vertex has $|Q|$ choices, and each choice contributes $1_K$. Its contribution after summation is therefore $\lambda$. The choices at different isolated vertices are independent, so Proposition~\ref{prop:state-sum} gives $\lambda^c$ for every boundary assignment in $R_P$. It gives $0_K$ for every other assignment, proving the matrix formula. This argument includes the empty boundary and the case $c=0$, when there is one empty assignment of the isolated vertices.

To compare two such matrices, choose a state $q\in Q$ and give every boundary position the state $q$. This assignment satisfies every boundary partition, so equality of the matrices forces $\lambda^c=\lambda^{c'}$. When the boundary is empty, its unique assignment gives the same conclusion. Write $a$ for this common scalar. If $a=0_K$, both matrices are zero, whatever their boundary partitions.

Suppose now that $a\neq0_K$. The matrix entry is then nonzero exactly at the assignments in $R_P$. Equality of the matrices therefore implies $R_P=R_{P'}$, and the two-state separation argument in Theorem~\ref{thm:completeness} gives $P=P'$. Conversely, the formula gives equal matrices whenever the stated condition holds. Finally, when $Q$ has one element, every boundary partition admits the unique boundary assignment and $\lambda=1_K$. The formula then gives the asserted one-entry matrix.
\end{proof}

To recover a structural graph from its matrix, we must recover both the boundary partition and the isolated-vertex count. Different counts must therefore give different powers of $\lambda$. We call the interpretation faithful when equality of matrices implies equality in the original structural calculus. The two-state sufficiency result is \citet[Theorem~4.1]{EastJohnsonKambites2026}. Their characteristic-zero convention is equivalent to injectivity of $\mathbb N\to K$, as the proof below explains.

\begin{corollary}\label{cor:structural-matrix-faithful}
For a nonempty finite state set $Q$, the canonical $K$-matrix interpretation is faithful on the original structural calculus if and only if $|Q|\geq2$ and the powers
\[
1_K,\lambda,\lambda^2,\ldots
\]
are pairwise distinct. For $|Q|\geq2$, this condition on the powers is equivalent to injectivity of the canonical map
\[
\mathbb N\longrightarrow K,\qquad n\longmapsto n\cdot1_K.
\]
In particular, the interpretation is faithful over $\mathbb N$ for every fixed state set with at least two elements.
\end{corollary}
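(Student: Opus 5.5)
Faithfulness means: whenever $\sem{s}=\sem{t}$, the normal forms $(P,c)$ and $(P',c')$ of Theorem~\ref{thm:struct-nf} coincide. I would read both directions off the equality criterion of Theorem~\ref{thm:matrix-equality}.

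\emph{Sufficiency.} Assume $|Q|\geq2$ and that the powers of $\lambda$ are pairwise distinct. Then no power of $\lambda$ is $0_K$. Indeed, $\lambda^k=0_K$ would give $\lambda^{k+1}=0_K=\lambda^k$, contradicting distinctness. If $\sem{s}=\sem{t}$, the criterion gives $\lambda^c=\lambda^{c'}$, and distinctness yields $c=c'$. Since this common power is nonzero, the criterion also gives $P=P'$.

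\emph{Necessity.} For $|Q|=1$, the terms $e$ and $\dcounit\comp\dunit$ have the distinct partitions $\{\{1_{\mathrm{in}},1_{\mathrm{out}}\}\}$ and $\{\{1_{\mathrm{in}}\},\{1_{\mathrm{out}}\}\}$. By the one-state clause of Theorem~\ref{thm:matrix-equality}, both have matrix $(1_K)$, so faithfulness fails. For $|Q|\geq2$, suppose $\lambda^i=\lambda^j$ with $i<j$. Take $s$ to be the $i$-fold and $t$ the $j$-fold monoidal power of $\dunit\comp\dcounit$, both of type $0\to0$. They have normal forms $(\varnothing,i)$ and $(\varnothing,j)$, which are distinct, yet their matrices $\lambda^i$ and $\lambda^j$ agree.

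\emph{Equivalence with injectivity of $\mathbb N\to K$.} Write $N=|Q|\geq2$. Then $\lambda^k=N^k\cdot1_K$, since multiplication of sums of units in a semiring corresponds to multiplication of natural numbers. If $n\mapsto n\cdot1_K$ is injective, the distinct integers $N^k$ therefore give distinct powers of $\lambda$.

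The converse is the main obstacle, because subtraction is unavailable in $K$. Suppose $a\cdot1_K=b\cdot1_K$ with $a<b$. Put $d=b-a\geq1$, so that adding $d\cdot1_K$ to $a\cdot1_K$ returns $a\cdot1_K$. Write $x_n=n\cdot1_K$. Repeated addition gives $x_a=x_{a+kd}$ for all $k\geq0$. Adding $x_r$ to both sides then gives $x_n=x_{n+kd}$ for every $n\geq a$ and every $k\geq0$. Thus the sequence $(x_n)$ is eventually periodic, and its set of values is finite. Consequently the powers $\lambda^k=x_{N^k}$ take only finitely many values, so two of them coincide.

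Finally, over $\mathbb N$ the canonical map is the identity, hence injective. This gives the last assertion of the corollary.
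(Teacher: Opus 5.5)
Your proposal is correct and follows the paper's proof essentially step for step. Necessity comes from the one-state clause and from monoidal powers of $\dunit\comp\dcounit$ with coinciding scalars. Sufficiency comes from Theorem~\ref{thm:matrix-equality}, after noting that distinct powers exclude $0_K$. The equivalence with injectivity comes from $\lambda^k=|Q|^k\cdot1_K$. Your eventual-periodicity argument $x_n=x_{n+kd}$ for $n\geq a$ is just the forward form of the paper's reduction $n\cdot1_K=(n-b+a)\cdot1_K$ for $n\geq b$; both show that only finitely many multiples of $1_K$ occur.
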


\begin{proof}
If two powers agree, the corresponding graphs consisting only of isolated vertices have the same scalar matrix, although their normal forms differ. A singleton state set also fails to distinguish the two boundary partitions of type $1\to1$. Both stated conditions are therefore necessary.

Conversely, pairwise distinct powers cannot include $0_K$, because any zero power would equal the next power. Theorem~\ref{thm:matrix-equality} therefore shows that equal matrices have equal partitions and equal powers, and hence equal isolated-vertex counts. The normal forms agree, so Theorem~\ref{thm:struct-nf} gives equality in the original calculus.

It remains to express the condition on powers in terms of the semiring. If $\mathbb N\to K$ is injective, it distinguishes the natural numbers $|Q|^c$, so the powers of $\lambda$ are pairwise distinct. Conversely, suppose $a\cdot1_K=b\cdot1_K$ for natural numbers $a<b$. For every $n\geq b$, this equality gives
\[
n\cdot1_K=(n-b+a)\cdot1_K.
\]
Repeatedly reducing the index in this way shows that every natural-number multiple of $1_K$ equals one with index smaller than $b$. There are therefore only finitely many such multiples, and the powers of $\lambda$ must repeat.
\end{proof}

Faithfulness here concerns individual structural morphisms. It does not assert linear independence of their matrices, which is a separate question when linear combinations of diagrams are allowed in a partition category.

When $\lambda=1_K$, all powers agree and the matrix depends only on the boundary partition, giving an interpretation of $\Corel$. If $|Q|\geq2$ and $0_K\neq1_K$, this interpretation is faithful by Theorem~\ref{thm:matrix-equality}. Both conditions are necessary: a singleton state set admits only one boundary assignment, and the semiring with $0_K=1_K$ gives only zero matrices.

The bone law can hold without additive idempotency. In a nonzero commutative ring of characteristic two, $3\cdot1_K=1_K$, so it holds on a three-state set. Two relations on this set may nevertheless have exactly two intermediate witnesses for a given boundary pair. Their matrix product then has entry $1_K+1_K=0_K$, whereas their relational composite has characteristic entry $1_K$. Thus the bone law does not ensure preservation of every relational composition.

\begin{theorem}\label{thm:chiK-iff}
Let $Q$ be a finite set with $|Q|\geq 2$. The characteristic-matrix map
\[
\chi^K\colon \Rel(Q)\longrightarrow \KMat(Q)
\]
preserves relational composition for all relations if and only if $K$ is
additively idempotent. Equivalently, $\chi^K$ is a \PROP{} morphism extending
the relational semantics if and only if $K$ is additively idempotent.
\end{theorem}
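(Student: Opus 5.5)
We prove the two directions separately and then deduce the \PROP{} formulation. For sufficiency, suppose $K$ is additively idempotent. For relations $R\subseteq Q^m\times Q^n$ and $S\subseteq Q^n\times Q^k$, the entry of $\chi^K(R)\chi^K(S)$ at $(x,z)$ is
\[
\sum_{y\in Q^n}\chi^K(R)_{x,y}\,\chi^K(S)_{y,z}=w\cdot 1_K,
\]
where $w$ is the number of intermediate words $y$ with $(x,y)\in R$ and $(y,z)\in S$. If $w=0$ this entry is $0_K$. If $w\geq1$, idempotency gives $w\cdot1_K=1_K$, exactly as in the proof of Theorem~\ref{thm:bone-scalar}. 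In both cases the entry equals $\chi^K(R\comp S)_{x,z}$. The Kronecker product, identities, symmetry and transpose are preserved entrywise with no condition on $K$. The argument is the same as in Corollary~\ref{cor:chi-models}: each entry of a Kronecker product is a single product of entries, and $1_K\cdot1_K=1_K$ while $0_K$ absorbs. This also covers $0_K=1_K$, where every matrix is zero. Hence $\chi^K$ is a \PROP{} morphism. By Proposition~\ref{prop:rel-classical} and Theorem~\ref{thm:special-lift}, it sends the canonical relational $D$-set to the canonical copy/compare matrices. It therefore extends the relational semantics.

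For necessity, we use $|Q|\geq2$ to construct two witnesses. Choose distinct states $0,1\in Q$ and work at type $1\to1$. Let $R=\{(0,0),(0,1)\}$ and $S=\{(0,0),(1,0)\}$. Then $(0,0)\in R\comp S$ through both $y=0$ and $y=1$, and these are the only witnesses. So the matrix product has entry $1_K+1_K$ at $(0,0)$, while $\chi^K(R\comp S)_{0,0}=1_K$. Preservation of composition therefore forces $1_K+1_K=1_K$. The distributivity calculation from the proof of Theorem~\ref{thm:bone-scalar} then gives $a+a=a$ for every $a\in K$, so $K$ is additively idempotent.

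For the stated equivalence, a \PROP{} morphism must preserve composition, so the forward implication follows from the necessity argument. The converse is the sufficiency argument. This step is the only point that needs care: we must check that $\chi^K$ preserves the symmetry, the identities and the tensor product, and that preservation of the structural generators is automatic.

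I expect no serious obstacle. The only subtle point is to make the counting argument explicit, namely that an entry of the product is $w\cdot1_K$ with $w$ the number of witnesses. Beyond that, we must choose a counterexample whose number of witnesses is exactly two. One could also take $Q\times Q$ composed with itself, but that gives $|Q|$ witnesses. Then $|Q|\cdot1_K=1_K$ alone does not yield idempotency, which is exactly the characteristic-two phenomenon noted before the theorem. So the two-witness relations above are the right choice.
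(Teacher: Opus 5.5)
Your proposal is correct and follows essentially the same route as the paper: you count witnesses for sufficiency, use a composite with exactly two witnesses to force $1_K+1_K=1_K$ and then apply the distributivity step for necessity, and note that identities, Kronecker products and symmetries are preserved entrywise for the \PROP{} formulation. The only difference is cosmetic: the paper builds its two-witness composite from $\{(\varepsilon,a),(\varepsilon,b)\}$ of type $0\to1$ and its converse of type $1\to0$, whereas your relations of type $1\to1$ serve the same purpose.
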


\begin{proof}
Suppose that addition in $K$ is idempotent, and consider relations
$R\subseteq Q^m\times Q^n$ and $S\subseteq Q^n\times Q^k$. For fixed external
tuples $x\in Q^m$ and $z\in Q^k$, matrix multiplication gives
\[
(\chi^K(R)\chi^K(S))_{x,z}
=
\sum_{y\in Q^n}
\chi^K(R)_{x,y}\chi^K(S)_{y,z}.
\]
Each tuple $y$ with $(x,y)\in R$ and $(y,z)\in S$ contributes $1_K$, and every other tuple contributes $0_K$. Finiteness and idempotency reduce the sum to $1_K$ when such a witness exists, and to $0_K$ otherwise. This is the characteristic entry of $R\comp S$.

Conversely, choose distinct $a,b\in Q$ and put
\[
R=\{(\varepsilon,a),(\varepsilon,b)\}\subseteq Q^0\times Q^1,
\qquad
S=\{(a,\varepsilon),(b,\varepsilon)\}\subseteq Q^1\times Q^0.
\]
The relational composite contains the single pair $(\varepsilon,\varepsilon)$ and its characteristic entry is $1_K$, whereas the matrix product is
\[
(\chi^K(R)\chi^K(S))_{\varepsilon,\varepsilon}=1_K+1_K.
\]
Equality with the characteristic matrix of the composite forces
\[
1_K+1_K=1_K.
\]
As in the proof of Theorem~\ref{thm:bone-scalar}, multiplying this equality by an arbitrary element of $K$ gives additive idempotency. Since characteristic matrices preserve identities, monoidal products and symmetries over every $K$, preservation of composition is also equivalent to the stated \PROP{} morphism condition.
\end{proof}

\begin{remark}\label{rem:chi-not-always-embedding}
We allow the degenerate case $0_K=1_K$ in the characteristic-matrix map. All relations of a given type then have the same matrix, so the map is not injective on morphisms, although Theorem~\ref{thm:chiK-iff} still applies. When $0_K\neq1_K$, the entries recover membership in the relation, and the map is injective on each set of morphisms. This injectivity alone does not imply preservation of composition.
\end{remark}

For weighted acceptance, consider the runs compatible with the initial and final boundary languages. One may accept when at least one run has nonzero weight, or when the sum of their weights is nonzero. Over $\mathbb N$ and $\mathbb R_{\geq0}$ these criteria agree, because a finite sum is nonzero exactly when some summand is nonzero. Where nonzero elements can sum to zero, runs may cancel in the final sum. The chosen criterion can then change the recognized language even when the transitions and structural equations are fixed.

\section{Representation theory: Abelian relational graphoids}\label{sec:representation}

Other relational graphoids combine states by a group operation: states in the same group can be multiplied, and their product is the resulting state. \citet[Theorem~4.4]{Pavlovic2009} classified these classical structures, whose data also describe the Abelian relational graphoids of \citet{Kalampakas2011}. We recall the groupoid description of \citet{HCC2013} to count the intermediate choices in their characteristic matrices.

\begin{theorem}[Pavlovic~\citep{Pavlovic2009}]\label{thm:rel-classification}
Special commutative dagger-Frobenius structures on a finite set $Q$ in $\Rel$
are in bijection with partitions
\[
Q=\bigsqcup_{i\in I} Q_i
\]
in which each $Q_i$ is equipped with an Abelian group structure. Equivalently,
these structures are disjoint unions of Abelian groups, viewed as groupoids
whose set of arrows is $Q$.
\end{theorem}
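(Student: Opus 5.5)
We prove Pavlovic's classification through the groupoid description of \citet{HCC2013}. Let $(\mu,\eta,\delta,\varepsilon)$ be a special commutative dagger-Frobenius structure on $Q$ in $\Rel$, with $\delta=\mu^\dagger$ and $\varepsilon=\eta^\dagger$. Read $\mu\subseteq Q^2\times Q$ as a partial multivalued operation: write $c\in a\cdot b$ when $((a,b),c)\in\mu$. Write $E=\{e\in Q:(\varepsilon,e)\in\eta\}$ for the set of units. The plan is in three steps. First, show that $\cdot$ is single-valued, so $Q$ carries a partial binary operation. Second, show that it is the composition of a groupoid with object set $E$. Third, show that this groupoid is a disjoint union of Abelian groups. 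The converse direction is then a direct verification, and it shows that the two constructions are mutually inverse.

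For the first step we use specialness. The composite $\delta\comp\mu=e$ says that for each $c$ there is a pair $(a,b)$ with $c\in a\cdot b$, and that no such pair yields a different output. That alone controls surjectivity, not single-valuedness. To get single-valuedness, combine the Frobenius law with the dagger. Take $c,c'\in a\cdot b$. The relation $\mu\comp\delta$ contains $((a,b),(c,c'))$. By the Frobenius law this equals $(\delta\otimes\id)\comp(\id\otimes\mu)$, which forces a decomposition $a\in c\cdot x$ and $x\cdot b\ni c'$. Applying associativity and specialness to $c$ and $c'$, one then aims to show $c=c'$. This is the technical heart of \citet{HCC2013}, and we expect it to be the \textbf{main obstacle}. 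Our first attempt is the route in \citet{HCC2013}, Lemma 5 and Theorem 7: show that the Frobenius law and the dagger make $(a,b)\mapsto a\cdot b$ invertible on each side. Concretely, for each $a$ there is $a^{-1}$ such that $b\in a^{-1}\cdot c$ whenever $c\in a\cdot b$. Cancellation then gives single-valuedness.

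For the second step, the unit laws $(e\bx\eta)\comp\mu=e$ and its mirror say that for every $a$ some unit $e\in E$ satisfies $a\cdot e=a$. Uniqueness of that unit follows from single-valuedness: if $e,e'$ are two such units, then $e\cdot e'$ is defined and equals both $e$ and $e'$. Define $\mathrm{dom}(a)$ and $\mathrm{cod}(a)$ as these left and right units. Associativity of $\mu$ makes this a category, and the inverses found in the first step make it a groupoid. Commutativity of $\mu$ forces $a\cdot b=b\cdot a$ whenever either side is defined. Suppose $a:e\to e'$ with $e\neq e'$. Then $a\cdot e$ is defined while $e\cdot a$ is not, which contradicts commutativity. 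So every arrow is an endomorphism, and the groupoid splits into its vertex groups $Q_e$ for $e\in E$. These groups are Abelian by commutativity. Indexing by $I=E$ gives the partition.

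For the converse direction, take a partition into Abelian groups and define $\mu=\{((a,b),ab):a,b\in Q_i\}$, $\eta=\{(\varepsilon,1_i)\}$, $\delta=\mu^\dagger$ and $\varepsilon=\eta^\dagger$. Verify each law by fixing the external states, as in the proof of Proposition~\ref{prop:rel-classical}:
\begin{enumerate}
\item Frobenius: each of the three composites relates $(a,b)$ to $(c,d)$ exactly when all four lie in one group and $ab=cd$.
\item Specialness: $\delta\comp\mu=e$ holds relationally because every $c$ has some decomposition, for example $c\cdot 1$.
\end{enumerate}
Finally, the two constructions are mutually inverse. Starting from a structure, the recovered operation, units and partition reproduce $\mu$ and $\eta$. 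Starting from a partition into groups, the units of the built structure are the identities $1_i$, and the recovered groupoid is the original disjoint union.
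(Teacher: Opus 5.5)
Your overall route is the paper's. Both take the groupoid structure from \citet{HCC2013} and then use the unit law together with commutativity to show that every arrow is a loop, so the vertex groups are Abelian and partition $Q$. Your direct check of the converse replaces the paper's appeal to the same correspondence, and that check is fine.

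The details you supply yourself in steps one and two misplace where the Frobenius law is needed.
\begin{enumerate}
\item Specialness alone already gives single-valuedness. If $c,c'\in a\cdot b$, then $(a,b)$ witnesses $(c,c')\in\delta\comp\mu=e$, so $c=c'$. Your own phrase ``no such pair yields a different output'' says exactly this.
\item Your proposed Frobenius argument starts from an unjustified step. From $c,c'\in a\cdot b$ it does not follow that $((a,b),(c,c'))\in\mu\comp\delta$. Here $\delta=\mu^\dagger$ is not copying, so you would need some $d\in a\cdot b$ with $d\in c\cdot c'$.
\item Associativity does \emph{not} make the structure a category. Take $Q=\{1,a\}$ with $1\cdot1=1$, $1\cdot a=a\cdot1=a$ and $a\cdot a$ undefined. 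The monoid, unit, commutativity, dagger and specialness conditions all hold, and both units of $a$ are $1$, yet $a\cdot a$ is undefined. What fails is the Frobenius law: $(\delta\otimes\id_X)\comp(\id_X\otimes\mu)$ relates $(a,a)$ to $(a,a)$, whereas $\mu\comp\delta$ relates $(a,a)$ to nothing.
\end{enumerate}
So the genuine content imported from \citet{HCC2013} is that the Frobenius law gives two-sided inverses. With associativity, writing $a=a\cdot(b\cdot b^{-1})$, this yields that $a\cdot b$ is defined whenever $\mathrm{cod}(a)=\mathrm{dom}(b)$. That is exactly the fact the paper cites.

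Two smaller points. Uniqueness of units needs associativity to see that $e\cdot e'$ is defined, not single-valuedness alone. With your convention, for $a:e\to e'$ and $e\neq e'$ it is $e\cdot a$ that is defined and $a\cdot e$ that is not. This reversal does not affect the argument.
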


\begin{proof}
By \citet{HCC2013}, a special dagger-Frobenius structure on $Q$ gives a groupoid whose arrows are the elements of $Q$. Its multiplication relation is partial composition, and its unit selects one identity arrow per object. We determine the restriction imposed by commutativity.

Let
\[
g:x\to y
\]
be an arrow. In our left-to-right convention, the unit law gives
\[
1_xg=g.
\]
Commutativity also gives the defined product
\[
g1_x=g.
\]
For $g1_x$ to be defined, the target of $g$ must be $x$, so $y=x$. Every arrow is therefore a loop. The arrows at each object form a group under composition, and these groups partition $Q$:
\[
Q=\bigsqcup_{i\in I} Q_i.
\]
The restricted multiplication is commutative, so each $Q_i$ is Abelian.

Conversely, regard each Abelian group $Q_i$ as the arrows at one object and leave products between different groups undefined. The resulting groupoid has commutative multiplication, so the correspondence gives a special commutative dagger-Frobenius structure in $\Rel$. Its partition and group operations are exactly the data of \citet{Kalampakas2011}.
\end{proof}

If each group is a singleton, two states can be multiplied exactly when they are equal, and the product is that state. Multiplication is therefore canonical comparison, and its converse is copying. This recovers $\mathrm{TSRel}(Q)$, used in unitary graphoid automata and in the colourability automata of \citet{Kalampakas2023Colorability}. A nontrivial group also permits products of distinct states. The two-element group, for example, supplies the operations used for parity properties in the Abelian automata of \citet{Papadopoulos2025Recognition}.

Comultiplication lists the pairs whose product is a given state, and multiplication sends each pair back to that state. In characteristic matrices, specialness therefore depends on the number of such pairs. The bone composite instead counts the identity elements selected by the unit, one from each group.

\begin{proposition}\label{prop:groupoid-lifting}
Let $Q=\bigsqcup_{i\in I} Q_i$ be a nonempty finite set equipped with a
disjoint union of Abelian group structures, and interpret its relations as
characteristic matrices over a commutative semiring $K$.

Over every commutative semiring $K$, these matrices form a commutative
dagger-Frobenius monoid. On a component $Q_i$, the composite
$\dcomul\comp\dmul$ has value $|Q_i|\cdot1_K$ on the diagonal and $0_K$
elsewhere. Consequently, specialness holds on that component if and only if
$|Q_i|\cdot1_K=1_K$. Moreover, the bone scalar is $|I|\cdot1_K$, because the
unit relation selects one identity in each component. The structure is
therefore extraspecial if and only if $|Q_i|\cdot1_K=1_K$ for every $i\in I$
and $|I|\cdot1_K=1_K$. In particular, additive idempotency makes all these
conditions hold.
\end{proposition}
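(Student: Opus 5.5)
I will compute every composite directly as a state sum over intermediate words, in the style of the proof of Theorem~\ref{thm:special-lift}. Concretely, the structure is
\[
\dmul=\{(gh,gh):g,h\in Q_i,\ i\in I\},\qquad \dunit=\{(\varepsilon,1_i):i\in I\},
\]
with $\dcomul$ and $\dcounit$ their converses and $\swp$ the usual swap. Here the second component $gh$ is the product in $Q_i$, and $1_i$ is the identity of $Q_i$. The dagger equations are immediate because transpose of a characteristic matrix is the characteristic matrix of the converse.

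For the commutative Frobenius equations I will show that, once the external states are fixed, each side of associativity, unit, commutativity, their comonoid duals and the Frobenius law has at most one compatible internal assignment. Then every matrix entry is $1_K$ or $0_K$, and both sides agree with the relational equation, which holds by Theorem~\ref{thm:rel-classification}. Uniqueness comes from group cancellation, checked composite by composite:
\begin{itemize}
\item For $(\dmul\bx e)\comp\dmul$, the intermediate state is forced to be $gh$.
\item For the unit law, the intermediate pair is forced to be $(g,1_i)$ with $i$ the component of $g$.
\item For $\mu\comp\delta$ on input $(a,b)$ and output $(c,d)$, the single intermediate state is forced to be $ab$.
\item For $(\delta\otimes\id)\comp(\id\otimes\mu)$ on input $(a,b)$ and output $(c,d)$, the middle state is $c^{-1}a$, or equivalently $d b^{-1}$. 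Here $\delta$ splits $a=cu$, which forces $u=c^{-1}a$, and $\mu$ requires $ub=d$. So there is at most one witness, and it exists precisely when everything lies in one component and $ab=cd$.
\end{itemize}
Naturality of the swap holds over every $K$ by the permutation argument already given in Theorem~\ref{thm:special-lift}.

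Specialness is the one composite where uniqueness fails, and it gives the counts. The entry $(\dcomul\comp\dmul)_{g,h}$ sums over pairs $(u,v)$ with $uv=g$ (copy direction) and $uv=h$ (compare direction). So it is $0_K$ unless $g=h$. For $g=h\in Q_i$, the pairs are $(u,u^{-1}g)$ with $u\in Q_i$, giving $|Q_i|\cdot1_K$. Hence specialness on $Q_i$ is exactly $|Q_i|\cdot1_K=1_K$. For the bone scalar, $(\dunit\comp\dcounit)_{\varepsilon,\varepsilon}=\sum_q[q\text{ is some }1_i]$, which has $|I|$ nonzero summands and so equals $|I|\cdot1_K$. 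Extraspecialness is the conjunction of these conditions. Finally, additive idempotency makes every nonempty finite sum of $1_K$ equal to $1_K$, and both $|Q_i|$ and $|I|$ are positive because $Q$ is nonempty.

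The main obstacle I expect is the Frobenius law. Its witness uniqueness must hold for both bracketings, and I must handle correctly the left-to-right orientation convention together with the fact that the inputs may lie in different components. In that case the entry should be $0_K$ on both sides, so I will verify that the middle composite $\mu\comp\delta$ also vanishes in that case: the product $ab$ is then undefined.
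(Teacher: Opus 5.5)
Your proposal is correct and follows essentially the same route as the paper. Group cancellation leaves at most one intermediate witness in the monoid, unit and Frobenius composites, and transpose supplies the comonoid and dagger laws, so the relational equations of Theorem~\ref{thm:rel-classification} transfer to matrices over any $K$; the explicit counts $|Q_i|\cdot1_K$ for $\dcomul\comp\dmul$ and $|I|\cdot1_K$ for the bone scalar then give the specialness and extraspecialness criteria (one small correction: $|Q_i|\geq1$ holds because every group is nonempty, while nonemptiness of $Q$ is what gives $|I|\geq1$).
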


\begin{proof}
Group multiplication is associative and commutative. In these monoid equations, fixed inputs determine at most one intermediate product, so the relational equalities also hold for characteristic matrices over arbitrary $K$. The unit equations have the same property, because each state belongs to exactly one group and that group has a unique identity. Transpose gives the comonoid laws. In the Frobenius equation, fix two input states and two output states. If all four lie in the same group, each side has exactly one intermediate choice when the product of the inputs equals the product of the outputs, and none otherwise. If the states do not all lie in one group, both sides have value $0_K$. Hence the Frobenius equation also holds over every $K$.

Specialness can involve several intermediate choices. Fix $z$ in a component $G=Q_i$. Since comultiplication is the converse of multiplication, its intermediate pairs are
\[
(x,y)\in G\times G
\quad\text{such that}\quad
xy=z.
\]
Each $x\in G$ gives exactly one pair, with $y=x^{-1}z$, and every pair multiplies back to $z$. Thus
\[
(\dcomul\comp\dmul)_{z,z}
=
\sum_{xy=z}1_K
=
|G|\cdot1_K.
\]
Off-diagonal entries are zero, so specialness on $G$ is equivalent to
\[
|G|\cdot1_K=1_K.
\]

The unit and counit admit exactly the component identities, giving
\[
\dunit\comp\dcounit
=
\sum_{i\in I}1_K
=
|I|\cdot1_K.
\]
Hence extraspecialness holds precisely when the component conditions for specialness and $|I|\cdot1_K=1_K$ all hold. If addition in $K$ is idempotent, every nonempty finite sum of units equals $1_K$. All the groups and the index set $I$ are nonempty, so these conditions hold in that case.
\end{proof}

For a singleton group, only one pair multiplies to its state, so specialness holds over every commutative semiring. A larger group fails specialness over $\mathbb N$, since each state has more than one decomposition. Thus the singleton-group structures are exactly those in this family that remain special over every commutative semiring. Their unit chooses one identity for every state, reducing the bone condition to $|Q|\cdot1_K=1_K$.

The group structure determines how states interact at vertices, whereas acceptance also depends on the labelled transitions and boundary languages. Renaming states leaves acceptance unchanged when these data are renamed consistently. Let $\theta:Q_A\to Q_B$ be a bijection between the state sets of two unitary graphoid automata, and let $\theta^*$ act letterwise on words. Suppose it carries the initial and final languages of $A$ onto those of $B$ and satisfies
\[
(u,v)\in\delta_A(\sigma)
\quad\Longleftrightarrow\quad
(\theta^*(u),\theta^*(v))\in\delta_B(\sigma)
\]
for every label and every pair of words of the required lengths. The bijection preserves the equalities imposed by copy and compare and pairs the intermediate words used in composition. Induction on a graph expression therefore pairs its accepting runs in the two automata, proving that they accept the same graphs.

Identifying distinct states may introduce new accepting runs. If one unary transition ends at $p$ and another starts at $q\neq p$, they cannot occur consecutively in a run. After identifying $p$ and $q$, the transitions may become composable and produce a new accepting run. An automaton reduction must preserve acceptance for the labelled transitions and boundary languages as well as respect the structural operations, as discussed in Section~\ref{sec:conclusion}.

\section{Examples}\label{sec:examples}

The equality criterion for structural matrices allows us to see exactly how the choice of semiring affects the distinction between graphs. We begin with this effect, then use labelled transitions to constrain the state assignments.

\begin{example}\label{ex:isolated}
Let $Q$ have $r\geq2$ states, and let $L=\dunit\comp\dcounit$ denote an isolated internal vertex. The structural graph
\[
e_1\bx L^{\bx c}:1\to1
\]
has its input and output attached to one vertex and has $c$ further isolated vertices. Over $\mathbb N$, its matrix is $r^c$ times the identity, so different values of $c$ give different matrices. Boolean evaluation instead gives the identity for every $c$, in agreement with the bone law.

A third behaviour occurs over $\mathbb Z/4\mathbb Z$ with two states. Here $\lambda=2$ and $\lambda^2=0$. Consequently, adjoining two isolated vertices makes every entry of a structural matrix zero. For instance, $e_1$ joins the input and output, whereas $\dcounit\comp\dunit$ leaves them at different vertices. They have different boundary partitions and remain distinct after adjoining two isolated vertices to each, but both resulting matrices are zero. Thus a zero scalar can give equal matrices for graphs with different boundary partitions, as allowed by Theorem~\ref{thm:matrix-equality}.
This instance also appears in \citet[Theorem~4.3]{EastJohnsonKambites2026}.

Scalar powers can also repeat without becoming zero. Over $\mathbb Z/3\mathbb Z$ with two states, $\lambda=2$ and its powers alternate between $1$ and $2$. Theorem~\ref{thm:matrix-equality} then says that two structural matrices agree exactly when their boundary partitions agree and their isolated-vertex counts have the same parity. The matrices still distinguish the partition, although they no longer determine the number of isolated vertices.
\end{example}

\begin{example}\label{ex:decorated-state-sum}
To see how a labelled edge constrains a state assignment, take two unary
labels $a,b\in\Sigma_{1,1}$. Interpret them over a commutative semiring
$K$ by matrices $M_a,M_b\colon Q\times Q\to K$. Form an open labelled
hypergraph $H\colon1\to1$ with vertices $u,w,v$, an $a$-edge from $u$ to
$w$, and a $b$-edge from $w$ to $v$. Put the input at $\ell(1)=u$ and
the output at $r(1)=v$.

Fix boundary states $x,y\in Q$. The only state still to be assigned is
the state $z$ at the internal vertex $w$. This same state is used as the
output of the $a$-edge and the input of the $b$-edge. For a chosen $z$,
the two edges contribute the product $M_a(x,z)M_b(z,y)$. Adding over the
possible internal states gives
\[
\sem{H}_{x,y}=\sum_{z\in Q} M_a(x,z)M_b(z,y),
\]
as in Proposition~\ref{prop:state-sum}.
Thus a path gives ordinary matrix multiplication. In the Boolean case it tests for an intermediate state permitting both transitions, while over $K$ it sums their weights.
\end{example}

\begin{example}\label{ex:colorability}
The colourability automaton of \citet{Kalampakas2023Colorability} assigns
colours to vertices and requires the ends of each edge to have different
colours. Its state set is $Q=\{1,\ldots,k\}$, equipped with
$\mathrm{TSRel}(Q)$, and its ordinary directed edge label
$a\in\Sigma_{1,1}$ has transition relation
\[
\delta(a)=\{(i,j):i,j\in Q,\ i\neq j\}.
\]
The initial and final languages are $I=T=\{\varepsilon\}$, so an accepted
graph must have empty input and output boundaries.

The structural relations assign one colour to each vertex, and $\delta(a)$ requires different colours at adjacent vertices. Since this relation is symmetric, edge direction does not affect the requirement. Accepting assignments are therefore exactly the proper $k$-colourings of the closed ordinary graph.
\end{example}

\begin{example}\label{ex:z2-group}
The specialness restriction in Proposition~\ref{prop:groupoid-lifting}
already appears for the group $Q=\mathbb Z_2=\{0,1\}$. Equip $Q$ with
addition modulo $2$ and use the corresponding Abelian relational graphoid.
Its multiplication is
\[
\dmul=\{((x,y),x+y):x,y\in\mathbb{Z}_2\},
\]
the unit selects $0$, and comultiplication and counit are the respective
relational converses. These relations give a special commutative
dagger-Frobenius structure in $\Rel(Q)$.

Each element has two decompositions. For example, $0=0+0=1+1$, so the specialness composite returns $0$ through two intermediate pairs. Relational composition asks only whether such a pair exists, so the composite relates each state to itself and is the identity. Boolean matrix evaluation has the same value. Over $\mathbb N$, however, each pair contributes one unit, so both diagonal entries of $\dcomul\comp\dmul$ are $2$. Thus these characteristic matrices are special over the Boolean semiring but not over $\mathbb N$.
\end{example}

\begin{example}\label{ex:hyperedge}
Equip $Q=\mathbb Z_2$ with the canonical copy and compare structure, and use addition modulo $2$ in a labelled transition. Take $t\in\Sigma_{1,1}$ and $h\in\Sigma_{2,1}$ with
\[
\delta(t)=\{(x,x+1):x\in\mathbb{Z}_2\},
\qquad
\delta(h)=\{((x,y),z):z=x+y\}.
\]
Let $G\colon1\to1$ have vertices $u,v,w$, boundary at $u,w$, a $t$-edge from $u$ to $v$, and an $h$-edge with source list $(u,v)$ and target $w$.

Fix the input state $x$ at $u$ and the output state $z$ at $w$, and let $y$ be the state at $v$. The $t$-edge reads $x$ at $u$ and returns $y$ at $v$. The $h$-edge reads these same states in its source order $(u,v)$ and returns $z$ at $w$. Both transitions must hold, so
\[
(x,z)\in\sem{G}
\quad\Longleftrightarrow\quad
\exists y\in\mathbb{Z}_2
\quad
 y=x+1
\quad\text{and}\quad
 z=x+y.
\]
Substituting $y=x+1$ gives
\[
z=x+(x+1)=1,
\]
for either choice of $x$. Consequently,
\[
\sem{G}=\{(0,1),(1,1)\}=Q\times\{1\}.
\]
Here the shared vertex $u$ retains the input state for use in $h$. Copying enforces equality at its two incidences, while the label supplies the addition constraint.
\end{example}

\section{Discussion and conclusion}\label{sec:conclusion}

The comparison of graphoid equations and their interpretations supplies criteria for choosing weights in a graph automaton. Over $\mathbb N$, at least two states give a faithful structural interpretation. A model on the bone quotient must instead assign value $1_K$ to an isolated vertex, but may still fail to preserve arbitrary relational composition. With Abelian group operations at vertices, specialness depends on the number of decompositions of each state, while the bone law depends on the number of group identities. These checks determine which graph equations the weights respect before any labelled transitions are assigned.

A further problem is to decide whether two graphoid automata accept the same graphs, and to use this information to reduce their state sets. The automaton functors of \citet{BrugginkKoenig2018} assign finite state sets to interfaces and relations to arrows. For the unitary graphoid automata in Proposition~\ref{prop:automata-functors}, the set at interface $m$ is $Q^m$. A reduction must retain this relationship: there must be one finite set $R$ such that the new set at every interface $m$ is $R^m$, with concatenation still representing the monoidal product. The new transitions and boundary languages must also accept exactly the original graphs. It remains to determine whether these requirements yield a unique minimal automaton, once minimality has been defined.

For weighted automata, the smaller automaton must give the same scalar from Corollary~\ref{cor:acceptance-scalar} for every graph. Thus the total weight over allowed boundary words and internal state assignments must be preserved. With the canonical structure on a nonempty finite state set and an additively idempotent semiring, Theorem~\ref{thm:bone-scalar} allows us to study this question on the bone quotient. Tropical and max-min weights are examples.

Proposition~\ref{prop:limits} also leaves a question about generating relations. Which additional operations, together with the canonical equality structure, generate all of $\Rel(Q)$? The presentations used in interacting Hopf algebras and graphical linear algebra \citep{BSZ2017} suggest an approach. For relations on $Q$, such a presentation would have to specify the additional generators and the equations describing their interaction with the structural maps.

\section*{Statements and Declarations}

This research received no specific grant from any funding agency in the public, commercial or not-for-profit sectors. The author declares no competing interests. Data sharing is not applicable, as no datasets were generated or analysed during this study.

\bibliographystyle{plainnat}
\bibliography{mybib}
\enlargethispage{3\baselineskip}

\end{document}